\documentclass[english,final]{elsarticle}

\usepackage[utf8]{inputenc}
\DeclareUnicodeCharacter{00A0}{}

\usepackage[a4paper]{geometry}

\usepackage[T1]{fontenc}
\usepackage{lmodern}

\usepackage{multirow}

\usepackage{graphicx}
\usepackage{float}
\usepackage{wrapfig}

\usepackage{anysize}

\usepackage{enumerate}

\usepackage{cleveref}

\usepackage{amsmath,amsfonts,amsthm,amssymb}
\usepackage{mathtools}

\usepackage{xcolor}

\graphicspath{images}

\usepackage{xypic}


\theoremstyle{definition}
\newtheorem{defn}{Definition}[section]

\newtheorem{assumption}[defn]{Assumption}
\newenvironment{hyp}    
{%
	\pushQED{\qed}\begin{assumption}}
	{\popQED\end{assumption}}

\newtheorem{rem}[defn]{Remark}

\theoremstyle{plain}
\newtheorem{thrm}{Theorem}
\newtheorem{lem}[defn]{Lemma}
\newtheorem{prop}[defn]{Proposition}


\newcommand{\partiald}[2]{\frac{\partial #1}{\partial #2}}
\newcommand{\der}[2]{\frac{{\rm d} #1}{{\rm d} #2}}

\newcommand{\Li}{\mathrm{Li}}
\newcommand{\tend}{{t_{\mathrm{end}}}}
\newcommand{\tendI}{{t_{\mathrm{end}}^I}}
\newcommand{\J}{J_\delta}
\newcommand{\dx}{\,\mathrm{d}x}
\newcommand{\dt}{\,\mathrm{d}t}
\newcommand{\ds}{\,\mathrm{d}s}
\newcommand{\dr}{\,\mathrm{d}r}

\newcommand{\ce}{c_{\mathrm{e}}}
\newcommand{\cs}{c_{\mathrm{s}}}
\newcommand{\csB}{c_{\mathrm{s,B}}}
\newcommand{\csmax}{c_{\mathrm{s,max}}}
\newcommand{\phie}{\varphi_{\mathrm{e}}}
\newcommand{\phieLi}{\varphi_{\mathrm{e},\Li}}
\newcommand{\phis}{\phi_{\mathrm{s}}}
\newcommand{\Tamb}{T_{\mathrm{amb}}}
\newcommand{\phistest}{\psi_{\mathrm{s}}}
\newcommand{\phietest}{\psi_{\mathrm{e}}}
\newcommand{\cstest}{\Psi_{\mathrm{s}}}
\newcommand{\XPhi}{X_{\phi}}
\newcommand{\Rs}{R_{\mathrm{s}}}
\newcommand{\Rsp}{R_{\mathrm{s},+}}
\newcommand{\Rsm}{R_{\mathrm{s},-}}

\newcommand{\Done}{{{D_\delta}}}
\newcommand{\Dthree}{D_\delta^{3D}}

\newcommand{\alphaceLi}{{\alpha_{\rm a}}}
\newcommand{\alphacsLi}{{\alpha_{\rm s}}}
\newcommand{\alphacsLip}{{\alpha_{\mathrm {s},+}}}
\newcommand{\alphacsLim}{{\alpha_{\mathrm {s},-}}}

\newcommand{\alphaceeq}{{\alpha_{\rm e}}}
\newcommand{\alphacseq}{{\alpha_{\rm s}}}

\newcommand{\betacs}{{\beta_{\rm a}}}







    \makeatletter
    \def\ps@pprintTitle{%
    	\let\@oddhead\@empty
    	\let\@evenhead\@empty
    	\let\@oddfoot\@empty
    	\let\@evenfoot\@oddfoot
    }
    \makeatother

\begin{document}

	\begin{frontmatter} 		
		\title{On the well-posedness of a multiscale {mathematical} model for Lithium-ion batteries }
		
				\date{\today}
		
		\author[imi,appmath]{J.I. Díaz}
		\ead{jidiaz@ucm.es}
		
		\address[imi]{Instituto de Matemática Interdisciplinar,
			Universidad Complutense de Madrid,\\
			Plaza de Ciencias, 3, 28040 Madrid (Spain),}
		
		\address[appmath]{Dept. of Applied Mathematics and Mathematical Analysis,  
			Universidad Complutense de Madrid,\protect\\
			Plaza de Ciencias, 3, 28040 Madrid (Spain),\protect\\}
		
		\author[imi,icai]{D. Gómez-Castro}
		\ead{dgcastro@ucm.es}
		
		\address[icai]{Departamento de Matemática Aplicada, 
			Escuela Técnica Superior de Ingeniería - ICAI,\protect\\
			Universidad Pontificia Comillas, 
			C/ Alberto Aguilera, 25, 28015 Madrid (Spain)} 
		
		\author[imi,appmath]{A.M. Ramos}
		\ead{angel@mat.ucm.es}
				\begin{abstract}
					We consider the mathematical treatment of a system of nonlinear partial differential equations based on a model, proposed in 1972 by J. Newman, in which the coupling between the Lithium concentration, the phase potentials and temperature in the electrodes and the electrolyte of a Lithium battery cell is considered.  After introducing some functional space{s} well-adapted to our framework we obtain some rigorous results showing the well-posedness of the system, first for some short time and then, by considering some hypothesis on the nonlinearities, globally in time. As far as we know, this is the first result in the literature proving existence in time of the full Newman model, which follows previous results by the third author in 2016 regarding a simplified case. 
				\end{abstract}
				
				\begin{keyword}
					Lithium-ion {battery cell}, {{multiscale} mathematical model,} Green operators, fixed point theory, Browder-Minty existence results, super and sub solutions
					
					\MSC[2010] 35M10, 35Q60, 35C15, 35B50, 35B60
				\end{keyword}
	\end{frontmatter}
	
	\section{Introduction}

	Let us suppose we have a mathematical system of differential equations equations involving, {after a} homogenization {process} (or other methods), two different scales, that we may call macro and micro, for simplicity. A macro-scale domain is given by $x\in (0,L)$, where different processes may occur in three relevant subintervals: $(0,L_1)$, $(L_1,L_1+\delta)$ and $(L_1+\delta,L)$. At each point {$x\in (0,L_1) \cup (L_1+\delta, L)$} we consider that there is a micro scale domain given by a sphere with radius $R(x)>0$. Different processes are modeled in each scale with differential equations, which are coupled at different levels, including boundary conditions.
	
	Let us considered, motivated by the modeling of charge transport in Lithium batteries (as we will show below) the following (relatively) abstract framework (which can be generalized to other cases).
	
	Our system has 5 unknowns functions: $u(x,t)$, $v(x;r,t)$, $\varphi(x,t)$, $\phi(x,t)$ and $\theta (t)$ such that
	\begin{itemize}
		\item $u:(0,L)\times (0,t_{\rm end})\rightarrow \mathbb{R}$,
		\item $v:\Omega_\delta \times (0,t_{\rm end})\rightarrow \mathbb{R}$,
		\item $\varphi:(0,L)\times (0,t_{\rm end})\rightarrow \mathbb{R}$,
		\item $\phi:(0,L_1)\cup (L_1+\delta ,L)\times (0,t_{\rm end})\rightarrow \mathbb{R}$,
		\item $\theta: (0,t_{\rm end})\rightarrow \mathbb{R}$,
	\end{itemize}
	where $t_{\rm end}>0$ and
	$
	\Omega_\delta =\{ (x,r): x\in (0,L_1)\cup (L_1+\delta ,L) \mbox{ and } r\in [0, R(x)]\}.
	$
	Those functions satisfy the following system of macro scale equations: 
	\begin{equation} \label{gsoe1}
	\begin{dcases}
	\varepsilon \frac{\partial u}{\partial t} -{\mathcal L}_1 u =F_1(x,u,v,\varphi,\phi,\theta) & \mbox{in } (0,L)\times (0,t_{\rm end}), \\
		\mbox{with suitable boundary and initial conditions, and, for each }t \in (0,t_{\rm end}): &  \\
	\quad  {\displaystyle -{\mathcal L}_2 \varphi + {\mathcal L}_3 f(u)=F_3(x,u,v,\varphi,\phi,\theta)} & \mbox{in } (0,L), \\
	\quad  -{\mathcal L}_4 \phi =F_4(x,u,v,\varphi,\phi,\theta) & \mbox{in } (0,L_1)\cup (L_1+\delta L), \\
	\quad \mbox{with suitable boundary conditions.}
	\end{dcases}
	\end{equation}
	This system is coupled with the following system of  micro scale diffusion equation:
	\begin{equation} \label{gsoe2}
	\left\{
	\begin{array}{ll}
	\mbox{For almost each }x \in (0,L_1)\cup (L_1+\delta ,L): \\
	\ \ {\displaystyle \frac{\partial v}{\partial t} -\frac{1}{r^2}\frac{\partial}{\partial r}\left( r^2 D_2\frac{\partial v}{\partial r}\right) }=0 & \mbox{ in } (0,R(x))\times (0,t_{\rm end}) \\
	\ \ {\displaystyle \frac{\partial v}{\partial r} (x;0,\cdot)=0}  & \mbox{ in } (0,t_{\rm end}) \\
	\ \ {\displaystyle-D_2\frac{\partial v}{\partial r} (x;R(x),\cdot)=F_2(x,u,v,\varphi,\phi,\theta)}  & \mbox{ in } (0,t_{\rm end}) \\
	\ \ v(\cdot , 0)=v_{0} & \mbox{ in }(0,L).
	\end{array}
	\right.
	\end{equation}
	In the equations written above  ${\mathcal L}_1$ to ${\mathcal L}_4$ are second order differential operators (which may also depend on $x$, $u$, $v$, $\varphi$, $\phi$ and $\theta$), $\varepsilon$ is a function depending on $x\in (0,L)$ and $f$, $F_1$, $F_2$, $F_3$ and $F_4$ are real-valued functions.
	
	Finally, we add the following initial value problem:
	\begin{equation} \label{gsoe3}
	\begin{dcases} 
	\theta'(t) =  f(t,\theta (t);u, v, \varphi, \phi) \\
	\theta (0)=\theta_{0} ,
	\end{dcases}
	\end{equation}
	where the dependence of $f$ on $u, v, \varphi, \phi$ may be global in space.

	The question that arises is the following. Under which conditions can we prove existence and/or uniqueness of a local (in time) solution $(u,v,\varphi,\phi,\theta)$ of equations (\ref{gsoe1})--(\ref{gsoe3}) and under which conditions is it global in time?
	\\
	
	We study here a particular interesting case arising in the modeling of Lithium batteries. Lithium-ion batteries are currently extensively used for storing electricity in mobile devices, from phones to cars. Nevertheless they are still many drawbacks for them, as their reduced charge capacity, the long time needed to charge them, thermal runaways, etc. Therefore, a lot of research is being done in order to improve these devices. A good mathematical model of the transport mechanisms within batteries is very important in that research in order to understand the physics involved in the processes and to allow quick numerical experiments. But this models need to be well understood from a mathematical point {of view}, so that conclusions extracted from them are more rigorous. The well-posedness of the mathematical models being used is, therefore, one of the key points to be considered.  This is, indeed, the goal of this paper.
	\\

	In \cite{Ramos:2015:lithiumionbatteries} a full model for Lithium ion batteries was presented, based on on the well known J. Newman model (see \cite{newman1972electrochemical}), and partial results for the well posedness were given. In this paper, we intend to complete those well-posedness results and study the regularity of the solutions. Let us write the complete system as  presented in \cite{Ramos:2015:lithiumionbatteries} but using in the electrolyte the electric potential measured by a reference Lithium electrode ({$\phie$}) instead of its real electric potential. This is done because in electrochemical applications the potential in an electrolyte is typically measured by inserting a reference electrode of a pure compound, tipically a Lithium electrode (see {\cite{Bizeray2016,Ranon2014,richardson+ramos+ranon+please-charge+transport+lithium}}). {Local existence means that we are able to prove existence of solutions if the final time $\tend$ is ``small enough''}. In a special case, we will show that $\tend$ can be taken as large as wanted, making the existence of solutions global in time. \\
	
	A typical Lithium-ion battery cell has three regions: a porous negative electrode, a porous positive electrode and an electro-blocking separator. Furthermore, the cell contains an electrolyte, which is a concentrated solution containing charged species that move along the cell in response to an electrochemical potential gradient.\\	
	
	Let $L$ be the length of a cell of a battery, $L_1$ be the length of the negative electrode and $\delta$ the length of the separator. We assume the radius of an electrode particle to be $\Rs (x)$, with
	\begin{equation} \label{eq:R s}
		\Rs(x) = \begin{dcases}
			\Rsm & \text{ if } x \in (0,L_1), \\
			\Rsp & \text{ if } x \in (L_1 + \delta, L).
		\end{dcases}
	\end{equation}
	A schematic representation of a cell is given in Figure \ref{fig:schematic battery}.
	
	\begin{figure}[H]
		\centering
		
		\includegraphics[]{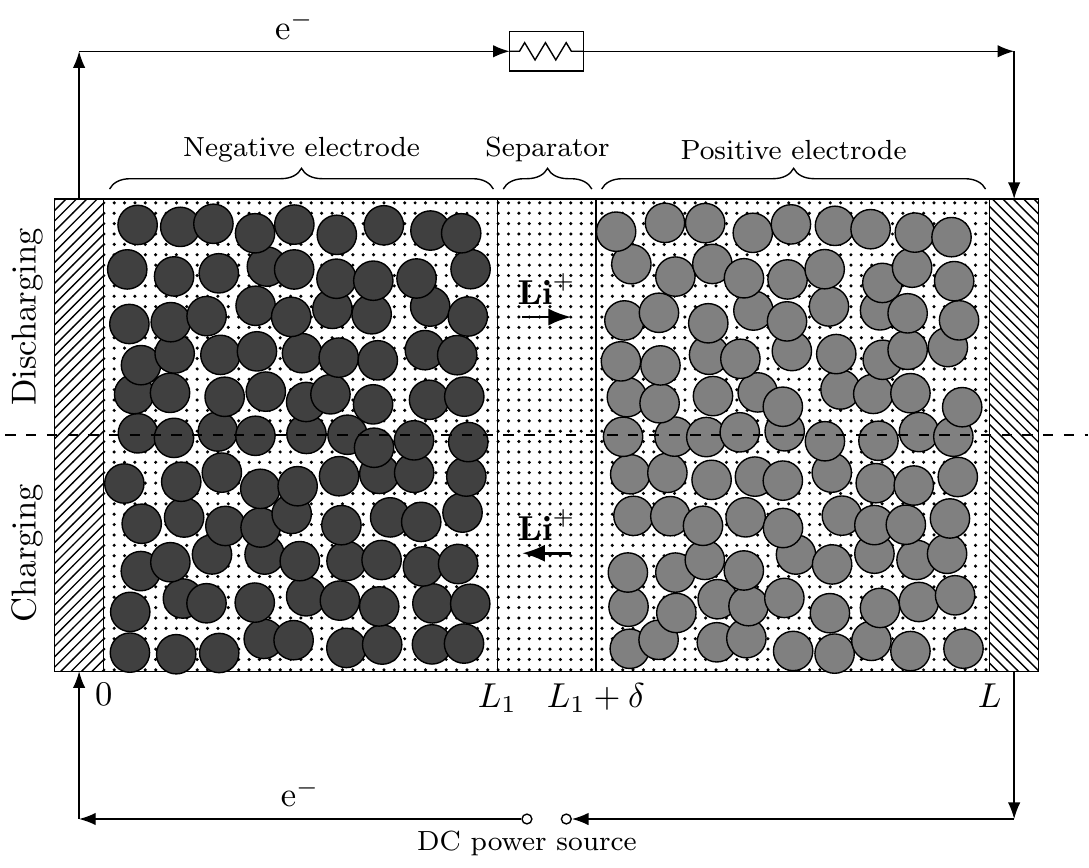}

		\caption{Schematic representation of a battery. Dark gray disks represent negative electrode particles, light gray disks represent positive electrode particles, the dotted region represents the presence of electrolyte. On both sides of the cell the lined region represents the current collectors.}
		
		\label{fig:schematic battery}
	\end{figure}
	The unknowns in the system of equations that we study are:
	\begin{itemize}
		\item {The concentration of Lithium ions in the electrolyte} $\ce = \ce (x,t)$ in every macroscopic point $x \in(0,L)${.}
		\item {The concentration of Lithium ions in the electrodes} $\cs = \cs (r,t;x)$, which for every macroscopic point $x \in (0,L_1) \cup (L_1+\delta, L)$ is defined in a microscopic ball {of radius $\Rs(x)$}, with $r$ indicating the distance to the center. We assume  radial symmetry in the diffusion taking place in each particle. Therefore, we do not need to consider angular coordinates{.}
		\item The electric potential $\phis = \phis(x,t)$ in the electrodes{.}
		\item  The electric potential measured by a reference Lithium electrode in the electrolyte, $\phie = \phie(x,t)$.
		\item The temperature $T(t)$ in the cell{.}
	\end{itemize}	
		
	{The system of equations is given by \eqref{eq:model ce}--\eqref{eq:model T} below}. \\
	
	The transport of Lithium through the electrolyte can be modeled by the following macro-scale system of equations {(conservation of Lithium in the electrolyte)}:
	\begin{align}
		&\label{eq:model ce}\begin{dcases} 
			\partiald {\ce} t - \frac{\partial }{\partial x} \left( D_{\textrm e} \frac{\partial \ce}{\partial x} \right)  = 	
			\alphaceeq {j^{\Li}}, & \textrm{in } (0,L) \times (0,\tend), \\
			\partiald {\ce} {x} (0,t) = \partiald {\ce} {x} (L,t) = 0, & t\in (0,\tend),\\
			\ce(x,0) = c_{\rm e,0}(x), & x \in (0,L),
		\end{dcases}
	\end{align}
	where $j^\Li {=j^\Li(x,\ce(x,t), \cs (\Rs(x),t{;x}), \phie(x,t), \phis(x,t), T(t)) }$ {is the reaction current resulting from intercalation on $\Li$ into solid electrode particles and} represents the Lithium flux between the electrodes and the electrolyte (which is a nonlinear function of all the variables, an expression {that} will be given later),
	$D_{\textrm e} > 0$ is a diffusion {function}, $\alphaceeq > 0$ is {constant} and $c_{\rm e, 0}$ is a known initial state.
	\\
	
	The transport of Lithium through the electrodes can be modeled, at almost every macroscopic point $x \in (0,L_1) \cup (L_1 + \delta, L)$, by the following micro-scale system of equations {(conservation of Lithium in the electrodes)}, {written in radial coordinates due to the radial symmetry}:
	\begin{align}
		& \label{eq:model cs}
		\begin{dcases}
			\partiald {\cs} t - \frac{D_s}{r^2} \partiald {} r \left( r^2 \partiald {\cs} r \right)  = 	
			0, &  \textrm{in }  (0,\Rs(x)) \times (0,\tend),\\
			\partiald {\cs} {r} (0,t;x) = 0, & {t\in (0,\tend),}\\
			\quad -D_s \partiald {\cs} {r} (\Rs(x),t;x) = \alpha_s(x) {j^{\Li}} & t\in (0,\tend),\\
			\cs(r,0;x) = c_{s,0}(r;x), & r \in (0,\Rs(x)),
		\end{dcases}
	\end{align}
	where $D_s > 0$ is a diffusion coefficient {and} $\alpha_s > 0${,} {both with a constant value} in $(0,L_1)$ {and another constant in} $(L_1+\delta, L)${. Furthermore} $c_{\rm s, 0}$ is a radially symmetric initial state. Notice that problem \eqref{eq:model cs} is written in spherical coordinates.\\

	The electric potential in the electrodes, $\phis$, and the electric potential measured by a reference Lithium electrode in the electrolyte, $\phie$, can be modelled, for each $t \in (0,\tend)$, by the following {equations {expressing} the conservation of charge in the electrolyte}:
	\begin{align}
		& \label{eq:model phie}\begin{dcases}
			- \partiald{}{x} \left( \kappa \partiald{\phie}{x}  \right) + \alpha _{\phie} T  \partiald {} x \left( \kappa \partiald{} {x} (f_{\phie} (\ce)) \right)   = {j^{\Li}} , & \textrm{in } (0,L),\\
			\partiald {\phie} {x} (0,t) = \partiald {\phie} {x} (L,t) = 0.\\
		\end{dcases}\\
		\intertext{{and the conservation of charge in the electrodes}}
		&\label{eq:model phis}\begin{dcases}
			-  \partiald{}{x} \left( \sigma \partiald{\phis}{x}  \right) = - {j^{\Li}} , & \textrm{in } (0,L_1) \cup (L_1 + \delta, L),\\
			\sigma(0) \partiald {\phis} {x} (0,t) = \sigma(L) \partiald {\phis} {x} (L,t) = -\frac {I(t)}{{ A }}, \\
			\quad \partiald {\phis} x (L_1,t) = \partiald {\phis} x (L_1 + \delta, t) = 0 ,
		\end{dcases}
		\end{align}
	where $\kappa = \kappa(\ce, T)$, given by a function $\kappa \in \mathcal C^2 ((0,+\infty)^2)$, $\kappa > 0$, and $\sigma \in L^\infty ({(}0,L_1{)} \cup {(}L_1 + \delta, L{)})$ are conductivity coefficients (uniformly positive), $\alpha _{\phie} \ge 0$ is a constant, $f_{\phie} \in \mathcal C^2 (0,+\infty)$, {$A$ is the cross-sectional area (also the currect collector area)} and $I$, the input current, is a piecewise constant function defined for $t \in [0,\tendI]$. {Naturally, if the input current is defined up to a time $\tendI$, then we can only expect to solve the system up to a time $\tend \le \tendI$.}
	\\
		
	Finally, we consider the temperature $T = T(t)$ inside the battery, which we consider spatially constant. Its time evolution is given by
		\begin{align}		
		& \label{eq:model T} 
		\begin{dcases}
			\frac{{\rm d} T}{{\rm d} t} (t) = - \alpha_T (T(t) - \Tamb) + F_T (\ce(\cdot, t), \cs (\Rs(\cdot) , t{; \cdot}) , \phie(\cdot, {t}), \phis(\cdot, {t}), T(t)), & t \in (0, \tend), \\
			T(0) = T_0,
		\end{dcases}
	\end{align} 
	where $\alpha_{T}$ is a positive constant, $\Tamb$ represents the ambient temperature, $T_0 \ge 0$ is an initial known temperature and $F_T$ is a continuous functional that represents the effect of the other variables over $T$ (an expression {that} will be given later). $F_T$ may depend of the values at time $t$ of functions $\ce, \cs, \phie$ and $\phis$ {locally or globally in space} (see \eqref{eq:defn FT start} - \eqref{eq:defn FT end}). \\
	
	In \cite{Ramos:2015:lithiumionbatteries}, {instead of} equations \eqref{eq:model ce}, \eqref{eq:model phie} and \eqref{eq:model phis} {the {author} consider the equations}
	\begin{align*}
	\varepsilon_{\rm e}\partiald {\ce} t & - \frac{\partial }{\partial x} \left( D_{\textrm e}\varepsilon_{\rm e}^{p} \frac{\partial \ce}{\partial x} \right)  = 	
	\alpha_e j^{\Li} ,  \quad \textrm{in } (0,L) \times (0,\tend),\\
	&- \partiald{}{x} \left( \varepsilon_{\rm e}^{p} \kappa \partiald{\phie}{x}  \right) + \alpha _{\phie} T  \partiald {} x \left( \varepsilon_{\rm e}^{p}  \kappa \partiald{} {x} (f_{\phie} (\ce)) \right)   = j^{\Li}  , \quad   \textrm{in } (0,L),\\
	&-\varepsilon_s \sigma \frac{\partial \phis}{\partial x^2} = - j^\Li , \quad \textrm{in } (0,L_1) \cup (L_1+ \delta, L).
	\end{align*}
	where $\varepsilon_{\rm e} > 0$ is constant in $(0,L_1), (L_1 , L_1 + \delta ) , (L_1 + \delta, L)$ {and $\varepsilon_{\rm s}$ in $(0,L_1),  (L_1 + \delta, L)$}. This general case{, which is not in divergence form for $\ce$ and $\phie$, } introduces an extra level of difficulty we will not consider here. The same techniques we  {present} in this paper apply to that case, but with the introduction of some additional technicalities (see, e.g., \cite{diaz+hetzer1998,bermejo+carpio+diaz+tello2009} and the references therein). 
	
	Here we have considered $\varepsilon_{\rm e}$ constant in $(0,L)$, and therefore we can remove it by assimilating $\varepsilon_{\rm e}^{p-1}$ in $D_{\rm e}$, $\varepsilon_{\rm e}^{-1}$ in $\alphaceeq$ and $\varepsilon_{\rm e}^p$ in $\kappa$.\\
	
	We simplify the domain notation by introducing the following sets:
	{
	\begin{align}
		 \label{eq:defn Ics} \J & =  (0,L_1) \cup (L_1 + \delta , L) \\
		 \Done &= \bigcup_{x\in J_\delta} \{x \} \times [{0,\Rs ({x})}] .
	\end{align}
	}
	In fact, we will consider $\Rs$ constant in both $(0,L_1)$ and $(L_1 + \delta, L)$ {as in \eqref{eq:R s}} (see Figure \ref{fig:Jdelta Ddelta}).
	\begin{figure}[H]
		\centering
		\includegraphics[scale = 1]{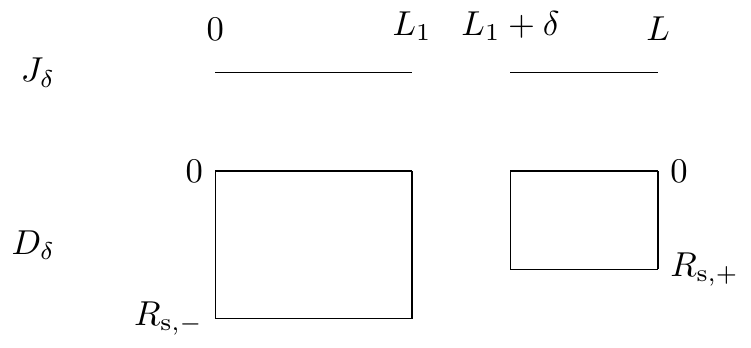}
		\caption{Domains $J_\delta$ (spatial domain of definition of $\ce, \phie, \phis$) and {$\Done$} (spatial domain of definition of $\cs$). Notice that, since we are using a radial {coordinate}, every {segment} $\{x \} \times [{0,\Rs ({x})}]$ {in $\Done$} represents a ball $ \{x \} \times B_{\Rs ({x})}$ {in $\{x\} \times \mathbb R^3$}.}
		\label{fig:Jdelta Ddelta}
	\end{figure}

	The analytical expression of $j^\Li$ is given as follows, for $ ( \ce,  \cs,  \phie,  \phis ,  T) \in \mathbb R^5 $:
	\begin{align}	
		\label{eq:j Li}
		j^{\Li}(x,  \ce,  \cs,  \phie,  \phis ,  T) &= \begin{dcases} 
			\overline{j}^\Li (x,  \ce,  \cs,  T, \eta (x, \ce,  \cs,  \phie,  \phis, T) ) , & x \in \J , \\
			0, &  \textrm {otherwise}, 
			\end{dcases}\\
		 \eta (x, \ce,  \cs,  \phie,  \phis,  T) & =  \phis -  \phie - U(x,  \ce,  \cs,  T), \quad x \in \J,
	\end{align}
	where $U$ is the open circuit potential and $\eta$ the surface overpotential of the corresponding electrode reaction.\\
	
	There is no common agreement on the structural assumptions of $U$. Some papers {use} a fitting {function} either polynomial  \cite{smith+wang2006diffusion+limitation+lithium} or exponential \cite{Birkl2015}, whereas other authors propose {an improved version with} logarithmic behaviour {close to the limit cases} \cite{ramos+please2015arxiv,richardson+ramos+ranon+please-charge+transport+lithium}. {We will start working in a general framework, which contains as a particular } important example the Butler-Volmer flux (see { \cite{newman1972electrochemical,Ramos:2015:lithiumionbatteries,richardson+ramos+ranon+please-charge+transport+lithium}}), in which the functions are taken as
		\begin{align} 
		\label{eq:Butler-Volmer start}
		\overline j^{\Li} &=  \ce ^{\alpha_a}  \cs^{{\alpha_c}} (\csmax -  \cs )^{{ \alpha_a} }  {h \left (x,  \frac{1}{T}  \eta \right )} \\
		{ h ( x, \eta) } &= {\delta_1  (x)} \exp (\alpha_a  \eta ) - 	{\delta_2 (x)} \exp ( - \alpha_c  \eta ) ,\\
		f_{\phie} ( \ce) &= \ln  {\ce} ,\\
		\alpha_a & \in (0,1), \\
		\alpha_c & \in (0,1) ,
		\label{eq:Butler-Volmer end}
		\end{align}
		where {{$\delta_1$, $\delta_2$ are positive and} constant in each electrode,} $\csmax$ is a constant that represents the maximum value of $\cs$, $\alpha_a$ and $\alpha_c$ (dimensionless constants) are anodic and cathodic coefficients, respectively, for an electrode reaction.
		
		Function $U$ was later  proposed {in} \cite{ramos+please2015arxiv} as 
		\begin{align}
		\label{eq:open circuit potential}
		U &= - \alpha (x)  T \ln \cs + \beta (x)  T \ln (\csmax -  \cs) + \gamma (x)  T \ln  \ce + p ( \ce,  \cs ,  T),
		\end{align}
		where $\alpha, \beta, \gamma$ are positive functions in $L^\infty (\J)$ and $p$ is a smooth bounded function.\\
		
		The following particular choice of $F_T$  {can be} considered {(see, e.g., \cite{Ramos:2015:lithiumionbatteries})}:
		\begin{align}
			F_T &=q_r + q_j + q_c + q_e , \label{eq:defn FT start}\\
			q_r &= A \int_0^L j^\Li \eta \dx, \\
			q_j &= A \int_{\J} \sigma \left( \frac{\partial \phis}{\partial x} \right)^2 + A \int_0^L \left[ \kappa \left(  \frac{\partial \phie}{\partial x}  \right)^2 + {\alphacseq} T \kappa \left( \frac{\partial \ln \ce }{\partial x} \right) \left( \frac{\partial \phie}{\partial x} \right) \right ]\dx, \\
			q_c &={ \frac{R_{\rm f}}{A} } I(t)^2, \\
			q_e &= TA \int_{\J} \left[ j^\Li  \frac{\partial U}{\partial T} \left( \frac {\cs (\Rs (x) , t{; x})}{\csmax}  \right) \right]\dx{,} \label{eq:defn FT end}
		\end{align}
		where $\ln$ is the natural logarithm {and $R_{\rm f}$ is the film resistance of the electrodes.} 
		\\
	
		We point out that $T$ has been assumed to be uniform across the whole cell. Nonetheless, a more complete model (specially one dealing with temperature blow up) {with} a heat diffusion equation could be {used}. The study of blow up in {this type of} equations has been largely studied (see, for example, \cite{diaz+antontsev2012energy+methods,diaz+casal+vegas2009blow+up+delay}).\\

	Using this model the voltage {at time $t$} of the cell can be estimated {(see \cite{Ramos:2015:lithiumionbatteries}) as
	\begin{equation} \label{eq:voltage}
		V(t) = \phis(L,t)  - \phis(0,t) - \frac{R_{\rm f}}{A} I. \\
	\end{equation}}

	{The model, as written above, presents a number of difficulties: pseudo-two dimensional (P2D) model, elliptic equations coupled with parabolic equations, discontinuities in space of some of the involved functions, {the fact that $j^\Li$ is not smooth and not monotone}, etc. As far as we know {there is} no proof in the literature of the global existence of solution before this work.}
	\\

	In order to state the results, we will introduce the following definitions, which we introduce as a mathematical tool:
	\begin{align}
		\csB(x,t) &= \cs (\Rs (x), t; x) , \\
		\phieLi (x,t) &= \phie(x,t) - \alpha_{\phie} T (t) f_{\phie} (\ce (x,t)) \label{eq:defn phieLi}.
	\end{align}
	Notice that $\csB$ represents the only values of $\cs$ that affect $j^\Li$ and $\phieLi$ is the solution of
	\begin{equation} \label{eq:equation phieLi}
		\begin{dcases}
		-\partiald{}{x} \left( \kappa \partiald{\phieLi}{x}  \right)= j^\Li, & \textrm{{in} }(0,L){,} \\
		\kappa  \frac {\partial \phieLi}{\partial x} = 0, & \textrm{{at }}\{0,L\}{,} \\
		\end{dcases}
	\end{equation}
	which is {a} {system} simpler than \eqref{eq:model phie}.
	\\
	
	First we will state {four} general results for a large class of functions $j^\Li${, regarding the local in time existence of solutions of the general abstract problem and their maximal extension in time}. Although the detailed expression of the assumptions are only given in the next section the reader can be aware now of the different nature of our mathematical results.
	
	\begin{thrm}[Well-posedness {with general flux and nature of the possible blow-up}] \label{thrm:well posedness model}
		Let assumptions \ref{hyp:regularity data}, \ref{hyp:regularity of coefficients}, \ref{hyp:regularity flux general} and \ref{hyp:regularity FT} hold. Then, {if $\tend$ is small enough,} there exists a unique weak-mild solution {by parts} of \eqref{eq:model ce}-\eqref{eq:model T} (in the sense of Definition \ref{defn:solution general problem by parts} and satisfying Assumption \ref{hyp:constant for phieLi}). 
		
		Moveover, there exists a unique maximal extension defined for $t\in [0,\tend)$  where $\tend$ is some constant $\tend \le \tendI$. If $\tend < \tendI$ then one the following conditions holds as $t\nearrow  \tend$:
		\begin{align} \label{eq:solutions to boundary}
		\nonumber \min_{J_\delta \times [0,t] } \csB \to 0 & \quad \textrm{ or } \quad \max_{ J_\delta \times [0,t] } \csB \to \csmax \quad  \textrm{ or } \quad \min_{[0,L] \times [0,t]} \ce \to 0 \quad \textrm{ or } \quad \max_{[0,t] \times[0,L] } \ce \to +\infty \\ 
		&  \quad \textrm{ or } \quad\min_{[0,t]}  T \to 0 \quad  \textrm { or } \quad \max_{[0,t] } T  \to +\infty.
		\end{align}
	
	\end{thrm}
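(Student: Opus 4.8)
The plan is to recast \eqref{eq:model ce}--\eqref{eq:model T} as a fixed-point problem for the triple $(\ce,\csB,T)$, with the potentials $(\phie,\phis)$ (equivalently $(\phieLi,\phis)$) slaved to that triple at each time through the elliptic subsystem, to run a contraction on a short time interval, and then to pass to a maximal interval by a continuation argument in which the only obstruction to prolonging the solution is that one of the data approaches the degeneracy set. The additive constants and flux-compatibility conditions forced by the Neumann data in \eqref{eq:model phie}--\eqref{eq:model phis} are fixed as in Assumption \ref{hyp:constant for phieLi}, and the notion of solution being constructed is that of Definition \ref{defn:solution general problem by parts}: weak for the elliptic equations, mild (Duhamel) for the parabolic ones, absolutely continuous for the ODE, and built piecewise in time along the pieces of the piecewise-constant $I$.

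\emph{Solution operators.} On the region where $\ce\ge m_e>0$, $\csB$ stays in a compact subinterval $[m_s,M_s]\subset(0,\csmax)$ and $T\ge m_T>0$, the nonlinearities $j^\Li$, $\eta$, $U$, $\kappa$, $f_{\phie}$ and $F_T$ are bounded and locally Lipschitz in all their arguments; this is exactly where the non-smooth, non-monotone fractional powers $\ce^{\alpha_a}$, $\cs^{\alpha_c}$, $(\csmax-\cs)^{\alpha_a}$ and the logarithms in $U$ would otherwise cause trouble, and away from $0$, $\csmax$ and $+\infty$ they are harmless. For frozen $(\ce,\csB,T)$ in this region and each fixed $t$, I solve the coupled elliptic system \eqref{eq:model phie}--\eqref{eq:model phis} — equivalently \eqref{eq:equation phieLi} for $\phieLi$ together with the $\phis$-equation: since $h(x,\cdot)$ is strictly increasing, $j^\Li$ is monotone in the combination $\phis-\phieLi$, so the map sending $(\phieLi,\phis)$ to the pair of weak elliptic residuals is monotone, coercive and hemicontinuous on the appropriate closed subspace of $H^1((0,L))\times H^1(J_\delta)$, and the Browder--Minty theorem gives a unique weak solution, with a priori bounds and Lipschitz dependence on $(\ce,\csB,T)$ (including on the coefficient $\kappa(\ce,T)$). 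With $(\phie,\phis)$ and hence $j^\Li$ determined, I use the analytic semigroup for \eqref{eq:model ce} and the radial-diffusion semigroup for \eqref{eq:model cs} to produce the mild solutions $\ce$ and $\cs$ (and its trace $\csB$ on $r=\Rs(x)$) by Duhamel's formula, and I solve the Lipschitz ODE \eqref{eq:model T} for $T$. This defines a map $\Phi$ on $C([0,\tend];\mathcal X)$, where $\mathcal X$ is the space of triples $(\ce,\csB,T)$ fixed in Definition \ref{defn:solution general problem by parts}.

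\emph{Short-time contraction and gluing.} By Assumption \ref{hyp:regularity data} the initial data satisfy the strict inequalities $\min\ce_0>0$, $0<c_{s,0}<\csmax$, $T_0>0$, so a closed ball around the time-constant extension of the initial state remains in the good region for $\tend$ small. Combining the parabolic smoothing (or merely the factor $\int_0^t$ in Duhamel against the uniform bound on $j^\Li$), the time-independent bounds on the elliptic solvers, and the Lipschitz constants of all nonlinearities yields a self-mapping of the ball together with $\|\Phi(w_1)-\Phi(w_2)\|_{C([0,\tend];\mathcal X)}\le C(\tend)\|w_1-w_2\|_{C([0,\tend];\mathcal X)}$ with $C(\tend)\to0$ as $\tend\to0$; Banach's theorem then produces the unique weak-mild solution by parts on $[0,\tend]$, which also settles uniqueness. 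Since $I$ is only piecewise constant, I run this step on the first interval of constancy of $I$ and restart from the computed final state on the next one, gluing the pieces — this is the sense in which the solution is obtained ``by parts''.

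\emph{Maximal extension, and the main obstacle.} Let $\tend$ be the supremum of existence times. If $\tend<\tendI$ and none of the six monotone quantities in \eqref{eq:solutions to boundary} reaches its critical value, then on $[0,\tend)$ the solution stays in a fixed compact subset of the good region; the uniform bound on $j^\Li$, the elliptic estimates (which then also control $\phie$ and $\phis$, which is why the potentials do not appear in \eqref{eq:solutions to boundary}), standard parabolic regularity and Gronwall give uniform bounds up to $t=\tend$, so the solution extends continuously to $\tend$ while remaining in the good region, and one more application of the short-time step continues it past $\tend$, contradicting maximality. Hence either $\tend=\tendI$ or one of the alternatives in \eqref{eq:solutions to boundary} holds. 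The delicate points I anticipate are, first, the microscale--macroscale coupling: obtaining estimates for the trace $\csB=\cs|_{r=\Rs(x)}$ that are uniform in $x\in J_\delta$ and fit the radial diffusion problem on $\Done$ cleanly into a single Banach-space contraction; and second, the quantitative handling of the coupled elliptic system for $(\phieLi,\phis)$, where only the monotonicity of $j^\Li$ in $\phis-\phieLi$ is available, yet the fixed point needs genuine Lipschitz stability of its weak solution in $(\ce,\csB,T)$.
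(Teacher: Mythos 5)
Your proposal follows essentially the same route as the paper: reduce to a fixed-point equation for $(\ce,\csB,T)$ with $(\phieLi,\phis)$ slaved to it via the Browder--Minty theorem (the paper's Proposition \ref{prop:resolvent elliptic}), establish Lipschitz/$\mathcal C^1$ dependence of the elliptic solution map on $(\ce,\csB,T)$ (the paper does this via the Implicit Function Theorem in Proposition \ref{prop:resolvent elliptic Lipschitz}), express the parabolic and ODE parts through Green operators satisfying $\rho(t)\to 0$-type contraction estimates, run Banach's fixed point on a short interval and glue across the jumps of $I$, and finally invoke the standard maximal-continuation alternative (distance to $\partial K_X$ tends to $0$ or the $X$-norm blows up) to obtain \eqref{eq:solutions to boundary}. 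The delicate points you flag at the end — Lipschitz stability of the monotone elliptic solver and the uniform-in-$x$ handling of the trace $\csB$ — are exactly the two technical lemmas the paper isolates and proves (Propositions \ref{prop:resolvent elliptic Lipschitz} and the regularity of $G_{\csB,t_0}$), so your sketch identifies the right missing ingredients even though it does not supply them.
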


	We remark that without assumption \ref{hyp:constant for phieLi} existence for {potentials} $\phie, \phis$ can only be established up to a constant {(as expected)}.\\
	
	In the particular case of $j^\Li$ being the Butler-Volmer flux, given by \eqref{eq:j Li}-\eqref{eq:open circuit potential}, under some physically reasonable hypothesis on the parameters (see Assumptions \ref{hyp:constant coefficients}, \ref{hyp:csmax less 1}, \ref{hyp:flux ramos please}, \ref{hyp:bounds exponents} below), we will prove a general existence result, and characterize the nature of possible blow up. {It states that, under reasonable extra conditions, the non-physical obstructions for well-possedness $\csB \to 0, \csmax$ or $\ce \to 0,+\infty$ that appear in Theorem \ref{thrm:well posedness model}, are not the cause of the blow-up behaviour of local solutions.}
	
	\begin{thrm}[{Well-posedness for the Butler-Volmer flux and nature of the possible blow-up}] \label{thrm:blow up behaviour}
		Let Assumptions \ref{hyp:regularity data}, \ref{hyp:regularity of coefficients}, \ref{hyp:regularity FT}, \ref{hyp:constant coefficients},  \ref{hyp:csmax less 1}, \ref{hyp:flux ramos please} and \ref{hyp:bounds exponents} hold. 
		Then{, {if $\tend$ is small enough,}} there exists a unique weak-mild solution {by parts} of \eqref{eq:model ce}-\eqref{eq:model T} (in the sense of Definition \ref{defn:solution general problem by parts} and satisfying assumption \ref{hyp:constant for phieLi}). This solution admits a unique maximal extension in time with $\tend \le \tendI$. 
		
		Moveover, if $\tend < \tendI$ then, as $t \nearrow \tend$ either
		\begin{align} \label{eq:reasonable condition blowup}
		\max_{\J \times [0,t]} |\phis - \phieLi| \to + \infty \quad \textrm{ or } \quad \min_{[0,t]}  T \to 0 \quad \textrm{ or } \quad  \max_{[0,t]} T \to  + \infty. 
		\end{align}
		Furthermore,
		\begin{equation}
		0 < \cs < \csmax \qquad \textrm{and} \qquad \ce > 0, \qquad \forall\  0 \le t < \tend.
		\end{equation}
	\end{thrm}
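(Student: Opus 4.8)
The plan is to bootstrap from Theorem~\ref{thrm:well posedness model}, using the extra structure of the Butler--Volmer flux to discard four of the six blow-up branches in \eqref{eq:solutions to boundary}. First I would verify that, under Assumptions~\ref{hyp:constant coefficients}, \ref{hyp:csmax less 1}, \ref{hyp:flux ramos please} and \ref{hyp:bounds exponents}, the flux \eqref{eq:Butler-Volmer start}--\eqref{eq:open circuit potential} meets the abstract requirement~\ref{hyp:regularity flux general}; Theorem~\ref{thrm:well posedness model} then already delivers existence, uniqueness, the maximal extension with $\tend\le\tendI$, and the dichotomy \eqref{eq:solutions to boundary}. It therefore suffices to prove a priori bounds of the type $0<\epsilon\le\cs\le\csmax-\epsilon$ and $0<\epsilon\le\ce\le\epsilon^{-1}$ on $\J\times[0,t]$ for every $t<\tend$, with $\epsilon$ depending only on the data and on $\sup_{\J\times[0,t]}|\phis-\phieLi|$, $\inf_{[0,t]}T$ and $\sup_{[0,t]}T$. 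Indeed, granting these bounds, the ``Furthermore'' assertion $0<\cs<\csmax$, $\ce>0$ on $[0,\tend)$ follows by letting $t\nearrow\tend$, and \eqref{eq:reasonable condition blowup} follows by contradiction: if $\tend<\tendI$ and none of its three alternatives held, then $|\phis-\phieLi|$ and $T,T^{-1}$ would stay bounded up to $\tend$, the above bounds would hold uniformly on $[0,\tend)$, so none of the six branches of \eqref{eq:solutions to boundary} could occur, contradicting maximality of $\tend$.

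The structural input --- what Assumptions~\ref{hyp:flux ramos please}--\ref{hyp:bounds exponents} are tailored to provide --- is obtained by inserting \eqref{eq:open circuit potential} and the identity $\phie=\phieLi+\alpha_{\phie}Tf_{\phie}(\ce)$ into the overpotential $\eta$ and then absorbing into $\overline j^{\Li}$ the powers of $\cs$, $\csmax-\cs$ and $\ce$ produced by the two exponentials in $h$. After this, $\overline j^{\Li}$ is a difference of two monomials in $(\ce,\cs,\csmax-\cs)$ with positive, $x$-dependent prefactors that are bounded above and below whenever $\phis-\phieLi$ and $T$ are controlled (this is also where working with $\phieLi$, the solution of the simpler problem \eqref{eq:equation phieLi}, is convenient). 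Comparing the two exponents --- the role of \ref{hyp:bounds exponents} --- one sees that near each degenerate face the favourable monomial dominates, so that $j^{\Li}$ is strictly negative when $\cs$ is small, strictly positive when $\csmax-\cs$ is small, strictly positive when $\ce$ is small, and strictly negative --- i.e.\ a sink for $\ce$ --- when $\ce$ is large.

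With these signs available I would close the a priori bounds by a simultaneous continuity (connectedness-in-time) argument. Fix $t_{0}<\tend$; since $\phis-\phieLi$ and $T$ are continuous on the compact $[0,t_{0}]$, the thresholds produced above are mutually consistent, so there is a single $\epsilon$ for which the four inequalities hold on a maximal subinterval $[0,\tau)\subseteq[0,t_{0}]$. On $[0,\tau)$ the flux is bounded, and one upgrades the bounds to $[0,\tau]$ by energy estimates: test \eqref{eq:model ce} with $(\ce-\epsilon)_{-}$ and with $(\ce-\epsilon^{-1})_{+}$, and the radial heat equation \eqref{eq:model cs} with $(\cs-\epsilon)_{-}$ and with $(\cs-(\csmax-\epsilon))_{+}$; the boundary terms either vanish (homogeneous Neumann for $\ce$) or reduce, for $\cs$, to a multiple of $\int_{r=\Rs(x)}j^{\Li}(\,\cdot\,)$, which on the truncation set has the sign that keeps the estimate monotone, so a Gronwall argument forces the truncated quantities to remain zero, since they vanish at $t=0$ by the strict bounds on the initial data in \ref{hyp:regularity data} and \ref{hyp:csmax less 1}. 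Hence $\tau=t_{0}$, and $t_{0}<\tend$ was arbitrary. The main obstacle is the second paragraph together with the choice of $\epsilon$: one must check that the exponent inequalities from \ref{hyp:bounds exponents} genuinely fix the sign of $j^{\Li}$ uniformly near the ``corners'' where two of $\cs$, $\csmax-\cs$, $\ce$ degenerate simultaneously, and that the four truncation thresholds can be picked consistently so that the continuity argument closes.
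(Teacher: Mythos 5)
Your overall strategy --- bootstrap from Theorem~\ref{thrm:well posedness model}, use the Butler--Volmer structure to derive a priori bounds $0<\epsilon\le\cs\le\csmax-\epsilon$, $0<\epsilon\le\ce\le\epsilon^{-1}$ on $[0,t_0]$ whenever $T$, $T^{-1}$ and $|\phis-\phieLi|$ are controlled on $[0,t_0]$, and then conclude by contradiction with the dichotomy \eqref{eq:solutions to boundary} --- is the same as the paper's. The difference is in \emph{how} the a priori bounds are proved. The paper never analyses the \emph{sign} of $j^\Li$ at all: it splits $j^\Li=\overline j^\Li_+-\overline j^\Li_-$ with $\overline j^\Li_\pm\ge 0$, uses only the one-sided inequalities $j^\Li\ge-\overline j^\Li_-$ and $j^\Li\le\overline j^\Li_+$ (and analogous ones at $r=\Rs$), and compares $\ce$, $\cs$, $\csmax-\cs$ with explicit sub/supersolutions of decoupled scalar problems whose nonlinearities $\beta_i$ are Lipschitz at the relevant endpoint precisely because of the exponent inequalities in Assumption~\ref{hyp:bounds exponents} (see the ``dead core'' Remark~\ref{rem:dead core}). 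You instead want a Stampacchia truncation plus a pointwise sign condition: $j^\Li\ge 0$ on $\{\ce<\epsilon\}$, $j^\Li\le 0$ on $\{\ce>\epsilon^{-1}\}$, $j^\Li\le 0$ on $\{\csB<\epsilon\}$, $j^\Li\ge 0$ on $\{\csB>\csmax-\epsilon\}$.

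The ``corner'' issue you flag in passing is in fact a genuine obstruction to this route, not a technicality to be checked. Consider the corner where $\ce\to 0$ and $\csB\to 0$ simultaneously at the same $(x,t)$. Writing $\overline j^\Li_\pm\propto \ce^{a_\pm}\csB^{b_\pm}(\csmax-\csB)^{c_\pm}$ with bounded prefactors, one has $a_--a_+=(\gamma_1+\gamma_2)(\alpha_{\phie}+\mu/T)>0$ and $b_+-b_-=(\gamma_1+\gamma_2)\lambda_{\min}/T>0$, so
\[
\frac{\overline j^\Li_-}{\overline j^\Li_+}\;\asymp\;\ce^{\,a_--a_+}\,\csB^{\,b_--b_+},
\]
which tends to $0$ or to $+\infty$ depending on the \emph{path} along which $(\ce,\csB)\to(0,0)$. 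Thus $j^\Li$ does not have a definite sign in any neighbourhood of that corner, and your $\ce$-truncation needs $j^\Li\ge 0$ there while your $\csB$-truncation needs $j^\Li\le 0$ there: these cannot both hold. The same conflict occurs at the opposite corner $\ce\to+\infty$, $\csB\to\csmax$. A continuity-in-time argument does not remove the difficulty, because the set $\{\ce<\epsilon\}$ and the set $\{\csB<\epsilon\}$ (over which your two truncated test functions are supported) can overlap in $x$ at a single time, and your Gronwall step needs the favourable sign on the whole support. This is precisely the coupling that the paper's one-sided estimates $j^\Li\ge -\overline j^\Li_-$, $j^\Li\le\overline j^\Li_+$ are designed to decouple: those inequalities are valid unconditionally (both $\overline j^\Li_\pm\ge 0$), so the comparison for each of $\ce$, $\cs$, $\csmax-\cs$ only ever needs a one-sided bound, never the net sign of $j^\Li$. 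If you want to keep the truncation framework you would have to replace the sign condition on $j^\Li$ by the same one-sided decomposition (i.e.\ drop the favourable half of $j^\Li$, bound the unfavourable half by a power of the variable being truncated, and absorb the remaining degenerate factors into coefficients controlled via Assumption~\ref{hyp:csmax less 1} and the exponent conditions), which essentially reconstructs the paper's super/subsolution argument in truncated form.

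Two smaller points. First, the exponent inequalities of Assumption~\ref{hyp:bounds exponents} are not really there ``to fix the sign of $j^\Li$''; they are there to make the absorption at the degenerate endpoints at least Lipschitz (exponent $\ge 1$) and the source at most linear (exponent $\le 1$), so that the comparison sub/supersolutions neither reach the boundary of $K_X$ nor blow up in finite time --- this is what Remark~\ref{rem:dead core} is pointing at. Second, the claim that the truncations ``vanish at $t=0$'' and ``remain zero'' needs care: strict positivity of the initial data gives $(\ce-\epsilon)_-|_{t=0}=0$ for $\epsilon$ small, but maximality of the interval $[0,\tau)$ only yields non-strict inequalities at $\tau$; one has to run the argument with a nested pair of thresholds to obtain a strict bound at $\tau$ before invoking continuity again.
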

	
	In the last part of {this} paper {(see Section \ref{sec:assumptions theorem 3 and 4})},  we give {an \emph{ad hoc}} modification of the system for which we can state a global existence theorem {by removing, in two steps, the impediments to global existence of solutions given by \eqref{eq:reasonable condition blowup}}. {Actually,} {since the blow-up conditions \eqref{eq:reasonable condition blowup} do not correspond to the physical intuition, it is likely that the solutions do not, in fact, develop this kind of behaviour.}\\
	
	In a first stage we will find a bound for the flux with respect to $\phis - \phieLi$,  and show:
	
	\begin{thrm}[Blow up behaviour when $j^\Li$ is bounded with respect to $\phis-\phieLi$] \label{prop:blow up truncated potential}
		Let Assumptions \ref{hyp:regularity data}, \ref{hyp:regularity of coefficients}, \ref{hyp:regularity FT}, \ref{hyp:constant coefficients},  \ref{hyp:csmax less 1}, {\ref{hyp:bounds exponents} and \ref{hyp:flux truncated}}   hold. Then, {if $\tend$ is small enough,} there exists a unique {weak-mild} solution {by parts} of \eqref{eq:model ce} - \eqref{eq:model T} (in the sense of Definition \ref{defn:solution general problem by parts} and satisfying Assumption \ref{hyp:constant for phieLi}). This solution admits a unique maximal extension in time with $\tend \le \tendI$. If $\tend < \tendI$, as $t \to \tend$ then either
		\begin{align} \label{eq:blow up of T}
		\min_{[0,t]}  T \to 0 \quad \textrm{ or } \quad  \max_{[0,t]} T \to  + \infty .
		\end{align}
	\end{thrm}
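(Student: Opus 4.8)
{\bf Proof proposal for Theorem \ref{prop:blow up truncated potential}.}

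\emph{Overall strategy.} The plan is to reduce this statement to Theorem \ref{thrm:well posedness model} by showing that, once the flux $j^\Li$ is truncated so as to be bounded with respect to $\phis-\phieLi$ (Assumption \ref{hyp:flux truncated}), the modified flux satisfies the general hypothesis \ref{hyp:regularity flux general} required there. Thus local existence, uniqueness of the weak-mild solution by parts, and the existence of a unique maximal extension on $[0,\tend)$ with $\tend\le\tendI$ all come for free. The only genuinely new content is to rule out, in the blow-up alternative \eqref{eq:solutions to boundary}, the four non-temperature scenarios $\min\csB\to0$, $\max\csB\to\csmax$, $\min\ce\to0$, $\max\ce\to+\infty$, leaving only the temperature alternatives \eqref{eq:blow up of T}.

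\emph{Step 1: verifying the general flux hypothesis.} First I would check that Assumption \ref{hyp:flux truncated}, together with \ref{hyp:constant coefficients}, \ref{hyp:csmax less 1} and \ref{hyp:bounds exponents}, implies \ref{hyp:regularity flux general}: continuity and the appropriate local Lipschitz/growth bounds on $j^\Li$ as a function of $(\ce,\csB,\phie,\phis,T)$. The boundedness of $j^\Li$ with respect to $\phis-\phieLi$ is exactly what removes the super-exponential growth of the Butler--Volmer term in the overpotential, so the verification should be routine. Having done this, invoke Theorem \ref{thrm:well posedness model} verbatim to obtain the local weak-mild solution by parts, its uniqueness, and the maximal interval $[0,\tend)$.

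\emph{Step 2: a priori bounds excluding $\ce\to0,+\infty$ and $\csB\to0,\csmax$.} This is the heart of the argument. On any $[0,t]\subset[0,\tend)$, the flux $j^\Li$ is now bounded in terms of $\ce$, $\csB$ and $T$ alone (not in terms of the potentials), with the Butler--Volmer structure forcing $\overline j^\Li$ to vanish as $\csB\to0$, as $\csB\to\csmax$ and as $\ce\to0$. For $\cs$: treat \eqref{eq:model cs} as a linear radial heat equation with Neumann datum $-\alpha_s j^\Li$; using the sign structure of $h(x,\eta)$ one builds constant (in $r$) super- and sub-solutions, or equivalently applies the comparison principle of Section (super and sub solutions, as announced in the keywords), to show $0<\cs<\csmax$ uniformly up to $\tend$. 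For $\ce$: equation \eqref{eq:model ce} is a linear parabolic equation in divergence form with uniformly positive $D_{\rm e}$ and right-hand side $\alphaceeq j^\Li$; since $j^\Li$ is now bounded on $[0,t]$ once $\cs$ and $T$ are bounded, a Gronwall/maximum-principle estimate gives $\ce$ bounded above, and the factor $\ce^{\alpha_a}$ in $\overline j^\Li$ (with $\alpha_a\in(0,1)$) together with the Neumann boundary conditions yields a strictly positive lower bound, so $\ce>0$ uniformly. Consequently none of the first four alternatives in \eqref{eq:solutions to boundary} can occur as $t\nearrow\tend$, and the blow-up dichotomy collapses to \eqref{eq:blow up of T}.

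\emph{Main obstacle.} I expect the delicate point to be the uniform lower bound $\ce>0$ (and, to a lesser extent, $\csB>0$), since these must hold up to the maximal time with constants that do not degenerate — one must be careful that the bounds on $j^\Li$ used to close the estimate were derived \emph{without} already assuming the lower bound, which is why the truncation with respect to $\phis-\phieLi$ is essential: it decouples the flux estimate from the (possibly blowing-up) potentials. Making this circular-looking chain of estimates into a clean bootstrap — first bound $\cs$ given any bound on $T$, then bound $\ce$, then feed back — is the step requiring the most care; everything else reduces to the comparison principle and to the already-established Theorem \ref{thrm:well posedness model}.
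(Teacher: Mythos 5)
Your overall strategy coincides with the paper's: the truncation in Assumption \ref{hyp:flux truncated} makes the bound on $j^\Li$ independent of $\phis-\phieLi$, so the sub/supersolution comparison argument from the proof of Theorem \ref{thrm:blow up behaviour} can be re-run with constants (the analogues of $\overline C_1,\overline C_3$) depending only on the bounds for $T$, on $\|p\|_{L^\infty}$ and on $\|H\|_{L^\infty}$, yielding bounds for $\ce$ and $\csB$ on $[0,t_0]$ whenever \eqref{eq:blow up of T} fails; this is exactly what the paper does. The one structural difference is how you close the argument: you appeal directly to Theorem \ref{thrm:well posedness model}, whose alternative \eqref{eq:solutions to boundary} involves only $(\ce,\csB,T)$, whereas the paper closes through Theorem \ref{thrm:blow up behaviour}, whose alternative \eqref{eq:reasonable condition blowup} contains $\max|\phis-\phieLi|\to+\infty$ and therefore requires an extra step that you skip: converting the bounds on $(\ce,\csB,T)$ into bounds on $(\phieLi,\phis)$ via the locally Lipschitz Green operator $\widetilde G_{\phi,t}$ of Lemma \ref{lem:G phi locally Lipschitz continuous}. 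Your shortcut is legitimate (and slightly more economical), provided you really carry out the verification announced in your Step 1 that the truncated flux satisfies Assumption \ref{hyp:regularity flux general} (this uses that $H$ is smooth with $H'>0$, so monotonicity in $\eta$ is preserved); the paper needs the same verification implicitly for the existence part.

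One point of your Step 2 is stated with the wrong mechanism and should be corrected. Boundedness of $j^\Li$ by itself does not prevent $\ce\to 0$ or $\cs\to 0,\csmax$ in finite time (a bounded outgoing boundary flux can empty a particle in finite time), spatially constant barriers do not satisfy the Neumann/Robin conditions in the sub/supersolution sense, and the exponent $\alpha_a\in(0,1)$ is precisely the dead-core-dangerous regime of Remark \ref{rem:dead core}, not the source of positivity. What actually excludes the four non-temperature alternatives is Assumption \ref{hyp:bounds exponents}: the absorption exponents $\alphaceLi+\gamma_2\bigl(\alpha_{\phie}+\mu/T\bigr)$, $\alphacsLi+\gamma_1\lambda_{\min}/T$ and $\betacs+\gamma_2\lambda_{\max}/T$ are $\ge 1$ while the source exponent is $\le 1$, so the comparison nonlinearities $\beta_1,\dots,\beta_4$ used in the proof of Theorem \ref{thrm:blow up behaviour} are Lipschitz at $0$ (no finite-time extinction) and at most linear at infinity (no blow-up of $\ce$). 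Since you do list Assumption \ref{hyp:bounds exponents} among your hypotheses and defer to the comparison machinery ``as in Theorem \ref{thrm:blow up behaviour}'', this is a fixable imprecision rather than a fatal gap, but the written proof should invoke that argument explicitly with the potential-independent constants (as the paper does by literally repeating the earlier proof), not the heuristics you state.
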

	Finally, by assuming some additional conditions (see Assumptions \ref{hyp:flux truncated}, \ref{hyp:cutoff FT}) we will obtain a bound for the temperature $T$ and prove what can be considered a first global existence result in the literature for this system:
	\begin{thrm}[Global existence in a modified case] \label{prop:global existence truncated}
			Let assumptions \ref{hyp:regularity data}, \ref{hyp:regularity of coefficients}, \ref{hyp:regularity FT}, \ref{hyp:constant coefficients}, \ref{hyp:csmax less 1}, {\ref{hyp:bounds exponents}, \ref{hyp:flux truncated}} and \ref{hyp:cutoff FT} hold. Then, there exists a unique {weak-mild} solution {by parts} of \eqref{eq:model ce} - \eqref{eq:model T}, defined for $t \in [0,\tendI]$ (in the sense of Definition \ref{defn:solution general problem} {and satisfying Assumption \ref{hyp:constant for phieLi}}).
	\end{thrm}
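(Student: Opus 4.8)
The plan is to bootstrap from Theorem \ref{prop:blow up truncated potential}. Under Assumptions \ref{hyp:regularity data}, \ref{hyp:regularity of coefficients}, \ref{hyp:regularity FT}, \ref{hyp:constant coefficients}, \ref{hyp:csmax less 1}, \ref{hyp:bounds exponents} and \ref{hyp:flux truncated}, that theorem already yields a unique weak-mild solution by parts together with its maximal extension on $[0,\tend)$ for some $\tend \le \tendI$, and it isolates the \emph{only} possible obstruction to reaching $\tendI$: as $t \nearrow \tend$ one must have $\min_{[0,t]} T \to 0$ or $\max_{[0,t]} T \to +\infty$. Consequently the whole task reduces to producing a priori bounds $0 < c_1 \le T(t) \le c_2$ that hold uniformly on $[0,\tend)$; once these are in hand the two alternatives in \eqref{eq:blow up of T} are excluded, the maximal time must satisfy $\tend = \tendI$, and the solution by parts glues into a solution on the closed interval $[0,\tendI]$ in the sense of Definition \ref{defn:solution general problem}.

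The a priori bound on $T$ comes solely from the temperature equation \eqref{eq:model T} together with the new Assumption \ref{hyp:cutoff FT}, whose role is precisely to make the (already truncated) functional $F_T$ globally bounded: there is a constant $M > 0$, depending only on the data, with $|F_T(\ce,\cs,\phie,\phis,T)| \le M$ for all admissible arguments, independently of the size of the solution. Equation \eqref{eq:model T} then reads $T' = -\alpha_T(T - \Tamb) + F_T$ with $-M \le F_T \le M$, so by the scalar comparison principle $T$ is trapped between the solutions $y_\pm$ of the autonomous linear ODEs $y_\pm' = -\alpha_T(y_\pm - \Tamb) \pm M$, $y_\pm(0) = T_0$, that is,
\begin{equation*}
\Tamb - \frac{M}{\alpha_T} + \Big( T_0 - \Tamb + \frac{M}{\alpha_T} \Big) e^{-\alpha_T t} \ \le\ T(t) \ \le\ \Tamb + \frac{M}{\alpha_T} + \Big( T_0 - \Tamb - \frac{M}{\alpha_T} \Big) e^{-\alpha_T t}.
\end{equation*}
In particular $\min\{ T_0, \Tamb - M/\alpha_T \} \le T(t) \le \max\{ T_0, \Tamb + M/\alpha_T \}$ for all $t$ in the maximal interval; the quantitative part of Assumption \ref{hyp:cutoff FT} is designed exactly so that the lower barrier is strictly positive (e.g.\ through $\Tamb > M/\alpha_T$, or through a cutoff forcing $F_T$ to vanish near $T = 0$ while $\Tamb > 0$), which gives the strictly positive constant $c_1$, while $c_2$ is the upper constant above.

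With $0 < c_1 \le T \le c_2$ on $[0,\tend)$ neither alternative in \eqref{eq:blow up of T} can occur, so Theorem \ref{prop:blow up truncated potential} forces $\tend = \tendI$. To pass from the solution by parts to a genuine solution on $[0,\tendI]$ in the sense of Definition \ref{defn:solution general problem} one notes that, as long as $T$ stays in $[c_1,c_2]$ and the truncations of Assumptions \ref{hyp:flux truncated} and \ref{hyp:cutoff FT} are in force, the quantitative local existence estimate gives a uniform lower bound on the length of each successive time step, so $[0,\tendI]$ is covered in finitely many steps and the pieces match; uniqueness on $[0,\tendI]$ is inherited from the uniqueness of the maximal extension in Theorem \ref{prop:blow up truncated potential}. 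The one genuinely delicate point is the verification underlying the second paragraph: one must check that Assumption \ref{hyp:cutoff FT} does bound $F_T$ by a constant independent of $\|\ce\|$, $\|\cs\|$, $\|\phis\|$, $\|\phie\|$ and of $T$ itself — in particular that each of the terms $q_r, q_j, q_c, q_e$ in \eqref{eq:defn FT start}--\eqref{eq:defn FT end} is controlled once the cutoff is applied — since otherwise the comparison with the autonomous ODEs $y_\pm$ would be circular. Everything else is a routine use of the ODE comparison principle and of the already-established local theory.
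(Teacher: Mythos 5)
Your overall strategy is the right one and matches the paper's: reduce everything, via Theorem \ref{prop:blow up truncated potential}, to excluding the temperature alternatives \eqref{eq:blow up of T}, and obtain that exclusion from an ODE comparison for \eqref{eq:model T}. The gap is in the key quantitative step. You base the comparison on the claim that Assumption \ref{hyp:cutoff FT} makes $F_T$ \emph{globally bounded}, $|F_T|\le M$, independently of $T$. That is not what the assumption says: it says $F_T$ is affine in $T$, namely $F_T = B_T(\ce,\cs,\phie,\phis) + T\, A_T(\ce,\cs,\phie,\phis)$ with $B_T\in[0,\overline B_T]$ and $A_T\in[\underline A_T,\overline A_T]$, so $F_T$ still grows linearly in $T$ and your trapping between the constant-forced ODEs $y_\pm' = -\alpha_T(y_\pm-\Tamb)\pm M$ is not justified. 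You yourself flag this as the ``genuinely delicate point'' and note the risk of circularity, but you leave it unresolved; as written, the central a priori bound does not follow from the stated hypotheses. The same misreading forces you to invent an extra condition for the positivity of the lower barrier (e.g. $\Tamb > M/\alpha_T$, or a cutoff of $F_T$ near $T=0$), which is nowhere among the assumptions of the theorem.

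The repair is exactly the structure the assumption was designed to provide, and is what the paper's one-line proof (``global sub and supersolutions for $T$'') means. Since $T>0$ on the existence interval, \eqref{eq:model T} gives
\begin{equation*}
(\underline A_T-\alpha_T)\,T \;\le\; (\underline A_T-\alpha_T)\,T + \alpha_T \Tamb \;\le\; T' \;\le\; (\overline A_T-\alpha_T)\,T + \alpha_T\Tamb + \overline B_T ,
\end{equation*}
using $B_T\ge 0$ and $T A_T \in [\underline A_T T, \overline A_T T]$. Comparison with the corresponding \emph{linear} ODEs yields $T_0\,e^{(\underline A_T-\alpha_T)t} \le T(t) \le y(t)$, where $y$ solves $y'=(\overline A_T-\alpha_T)y+\alpha_T\Tamb+\overline B_T$, $y(0)=T_0$; both barriers are finite and strictly positive on every finite time interval, with no extra condition on $\Tamb$, because the nonnegativity of $B_T$ and the fact that the $T A_T$ term vanishes as $T\to 0$ automatically protect the lower bound. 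These exponential (not uniform-in-time) bounds are enough: they rule out $\min_{[0,t]}T\to 0$ and $\max_{[0,t]}T\to+\infty$ as $t\nearrow\tend$ for any $\tend<\tendI$, so Theorem \ref{prop:blow up truncated potential} forces $\tend=\tendI$, and your concluding remarks about gluing the pieces and inheriting uniqueness then go through as you wrote them.
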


\section{Mathematical framework}

\subsection{Regularity assumptions of the nonlinear terms and initial data}
\label{sec:regularity assumptions}

Our most general formulation in this paper concerns the case of considering the following regularity conditions on the data:
\begin{hyp} \label{hyp:regularity data}
	Let us take the data:
	\begin{alignat*}{2}
	& {c_{\rm e,0}} \in H^1(0,L),  & & c_{e,0} > 0, \\
	& {c_{\rm s,0}} \in \mathcal C  ( {\overline \Done} ) , &  & 0 < c_{s,0} < \csmax \\
	& T_0 > 0 & & \\
	& I \in \mathcal C_{\rm {part}}([0, t_{\textrm {end}}^I]), & \qquad  & 0<{\tend\le} t_{\textrm {end}}^I<+\infty,
	\end{alignat*}
	where {$\mathcal C_{\rm part}$ denotes the set of piecewise continuous functions
	\begin{equation} \label{eq:defn piecewise continuous}
		\mathcal C_{\rm {part}}([a,b])= \{ f : [a,b] \to \mathbb R : \exists \, a = t_0< t_1<\cdots< t_N=b \textrm { such that } f \in \mathcal C([t_{i-1},t_i]) \}. 
	\end{equation} 
	Notice that this implies that the lateral limits $f(t_i^{\pm})$ exist, but need not coincide. }
	\end{hyp}

\begin{hyp} \label{hyp:regularity of coefficients}
	$D_{\rm e} \in L^\infty(0,L)$, $\kappa  \in \mathcal C^2 ((0,+\infty)^2), \sigma \in L^\infty (\J)$, $D_{\rm e} \ge D_{e,0} > 0$, $\kappa \ge \kappa_0 > 0, \sigma \ge \sigma_0 > 0$ and $f_{\phie} \in \mathcal C^2((0,+\infty))$.
\end{hyp}

The key ingredient of our approach is to choose some suitable convex subsets of some appropriate functional spaces. The choice of spaces well adapted to the system is a delicate matter, and errors or loose approaches to this might result in incorrect results (for an explanation of this philosophy see, e.g., \cite{Brezis:1999}). Let us define the open convex sets where we expect to find the solutions:
\begin{align*}
X &= H^1(0,L)\times \mathcal C(\overline{\J}) \times \mathbb R,\\
K_X &=  \Big \{  (\ce, c_{s,B} , T)  \in X : \ce > 0, 0 < c_{s,B} < \csmax , T > 0	\Big \},\\
Y &= L^\infty (0,L) \times \mathcal C (\overline {\J}) \times \mathbb R, \\ 
Z &= H^1(0,L)\times \mathcal C(\overline{\J}) \times H^1(0,L) \times H^1(\J) \times \mathbb R, \\
K_Z &= \Big\{ (\ce,c_{s,B}, \phie, \phis, T) \in Z : (\ce, c_{s,B} , T) \in  K_X\Big\},
\end{align*}
where $H^1(a,b)$ is the usual Sobolev space over the interval $(a,b)$ (see, e.g., {\cite{Brezis:2010,Ramos2012Intro+analisis}}). It is important to point out that $H^1(a,b) \hookrightarrow \mathcal C ([a,b])$. Finally, we define
\begin{equation} \label{eq:defn XPhi}
	{\XPhi} = \left \{ (u,v) \in H^1(0,L) \times H^1(\J) : \int_{0}^L u {(x) \dx} = 0 \right \},
\end{equation}
{the natural space in which we will look for the pair $(\phie, \phis)$.}\\

{Instead of narrowly focusing on \eqref{eq:Butler-Volmer start}-\eqref{eq:Butler-Volmer end} we shall state {an} assumption (sufficient to prove Theorem \ref{thrm:well posedness model}) satisfied by a broader family of functions:}
\begin{hyp} \label{hyp:regularity flux general} For the flux function we assume:
	\begin{align} \label{eq:jLi continuity}
	\overline{j}^\Li &\in \mathcal C^{2} (\J \times (0,+\infty) \times (0,\csmax) \times (0,+\infty) \times \mathbb R),\\
	U &\in \mathcal C^2 (\J \times (0,+\infty) \times (0, \csmax ) \times (0,+\infty)), \label{eq:U continuity}
	\end{align}
	such that
	\begin{equation} \label{eq:monotonicity of jLi with respect to eta} 
			\frac{\partial \overline{j}^\Li}{\partial \eta} (x,  \ce,  \cs,  T,  \eta) > 0, 
	\end{equation}
	{for all $ (x,  \ce,  \cs,  T,  \eta) \in \J \times (0,+\infty) \times (0,\csmax) \times (0,+\infty) \times \mathbb R.$}
\end{hyp}
\begin{rem} \label{rem:monotonicity F Li}
		In particular, it follows from Assumption \ref{hyp:regularity flux general} {(in particular due to \eqref{eq:jLi continuity} and \eqref{eq:monotonicity of jLi with respect to eta}} applying the {Mean} Value Theorem) that there exists a positive continuous function $F^\Li$ satisfying
		\begin{equation} \label{eq:coercivity}
		\Big(\overline{j}^\Li (x,  \ce,  \cs,  T,  \eta ) - \overline{j}^\Li (x,  \ce,  \cs,  T,  {\hat \eta} ) \Big)   ( \eta - {\hat \eta }) = F^\Li \left (x,  \ce,  \cs,  T,  \eta, {\hat \eta} \right ) \left | {\eta} - {\hat \eta} \right |^2,
		\end{equation}
		for all $ x \in \J,  \ce >0 ,  \cs \in (0,\csmax),  T>0$ and  $ \eta , {\hat \eta} \in \mathbb R$. 
\end{rem}

Finally, on the temperature term $F_T$ we will require {the following}:
\begin{hyp} \label{hyp:regularity FT} $
	F_T \in \mathcal C^1(  K_Z  ; \mathbb R) 
$ {(in the sense of {the Fréchet derivative})}.
\end{hyp}

\subsection{Definition of weak solution}

\label{sec:definition of weak solution}

	We introduce the natural space for radial solutions
\begin{equation*}
	H^1_r (0,R) = \{ u :(0,R) \to \mathbb R \textrm{ measurable such that } u(r)r, u'(r)r \in L^2 (0,R) \}
\end{equation*}
with the norm
\begin{equation*}
	\| u \|_{H^1_r(0,R)}^2 = \int_0^R |u(r)|^2r^2 \dr + \int_0^R |u'(r)|^2 r^2 \dr
\end{equation*}
and the space
\begin{equation*}
	L^2 (\J; H^1_r(0,\Rs(\cdot))) =\left \{ u : \Done \to \mathbb R \textrm{ measurable such that }  \int_0^L \|u (x, \cdot)\|_{H^1_r (0,\Rs(x))}^2 \dx < + \infty  \right \}.
\end{equation*}
		{
		\begin{rem}
			Even though we will always present the variables as $(x,t)$, or ${(r,t;x)}$, it will sometimes be mathematically {advantageous} to consider maps $t \in [0,t_0] \mapsto u(\cdot,t) \in X$, where $X$ will be a space of spatial functions, in which we will use the notations $\mathcal C([0,t_0]; X)$ or $L^2(0,t_0;X)$, depending on the regularity .
		\end{rem}
		}
\begin{defn} \label{defn:weak solution}
	We define a weak solution of \eqref{eq:model ce} - \eqref{eq:model T} as a quintuplet
	\begin{align*}
	(\ce, \cs, \phie, \phis , T) & \in L^2(0,\tend; \widetilde K_Z), \\
	\widetilde K_Z &= H^1(0,L) \times L^2 (\J;  {H^1_r(0,{\Rs (\cdot)})}) \times H^1(0,L) \times H^1(\J) \times \mathbb R,
	\end{align*}
	 such that
			\begin{align} 
			- \int \limits_0^\tend  \int \limits_{0}^L  \ce {\frac {{\rm d}\eta_e}{\dt}} \phietest \dx \dt
			 & +\int \limits_0^\tend  \int \limits_{0}^L  D_e \frac{\partial \ce }{\partial x} {\frac{{\rm d} \phietest }{\dx}} \eta_e \dx \dt
			  = \int \limits_0^\tend  \int \limits_{0}^L {\alpha_{\rm e}} j^\Li \eta_e \phietest \dx -   \int \limits_{0}^L  c_{e,0} \eta(0) \phietest \dx ,
			\end{align}
			for all $\eta_e \in \mathcal D([0,\tend)) = \{ f \in \mathcal C^\infty([0,\tend)) : \textrm{supp} f \subset [0,\tend) \textrm{ is compact} \} $ {(see, e.g. \cite{Lions:1969,Ramos2012Intro+analisis})} and $\phietest \in H^1 (0,L)$; {For a.e. $x \in \J$}
			\begin{align}
			-  \int \limits_0^\tend   \int \limits_{0}^{\Rs(x)}  \cs {\frac{{\rm d}\eta_s}{\dt}} {\cstest} r^2 \dr \dt &+  \int \limits_0^\tend   \int \limits_{0}^{\Rs(x)}  D_s  \eta _s  \frac{\partial \cs}{\partial r}  \frac{\partial {\cstest }}{\partial r} r^2 \dr  \dt \nonumber \\
			& = {-}\int \limits_0^\tend {\alphacseq(x)}  j^\Li \eta_s \cstest  \dt -    \int \limits_{0}^{\Rs(x)}  c_{s,0} \eta(0) {\cstest} r^2\dr  ,
			\end{align}
			for all $\eta_s \in \mathcal D([0,\tend)) $ and  $\cstest \in {H^1_r (0,\Rs(x))}$ radially symmetric. For every $t \in (0,\tend)$
			\begin{align}
			\label{eq:weak formulation phie}
			\int_0^L  \kappa \partiald {\phie} x {\frac{{\rm d}\phietest}{\dx}} \dx - \int_{\J} j^{\Li} \phietest \dx &=  \int_0^L  \kappa \alpha_{\phie} T  \partiald{}{x} (f_{\phie} (\ce )) {\frac{{\rm d}\phietest}{\dx}} \dx, \qquad \forall\, \phietest \in H^1 (0,L)  \\
			\label{eq:weak formulation phis}
			\int_{\J}  \sigma \partiald{\phis}{x} {\frac{{\rm d} \phistest }{\dx}} \dx +  \int_{\J} j^{\Li} \phistest \dx &= { \frac{I(t)}{A}} (\phistest(0) - \phistest(L)), \qquad \forall\, \phistest \in H^1 (\J)
			\end{align}
			and
			\begin{align}
			T(t) &= T_0 + \int_0^t ( - \alpha_T (T(s) - \Tamb)) + \nonumber \\
			& \qquad \int_0^t F_T(\ce {(\cdot, s)}, \csB {(\cdot, s)} , \phie{(\cdot, s)}, \phis{(\cdot, s)}, T(s) ))\ds. 
			\end{align}
\end{defn}

\subsection{Definition of weak-mild solution}

Dealing directly with the weak formulation is technically very difficult. On the other hand, solutions in the classical sense (having all the necessary derivatives) may not exist. There is an intermediate type of solutions, known as ``mild solutions''. As a general rule (and in particular this applies to our problem), any classical solution is a mild solution and any mild solution is a weak solution.\\

Let us introduce this kind of solutions in the simplest case: the heat equation. Consider the problem
\begin{equation} \label{eq:inocent heat}
	\begin{dcases} 
		\frac{\partial u}{\partial t} - \Delta u = f  & \textrm{in }  \Omega {\times (0,\tend)}, \\
		u = 0 & \textrm {on } \partial \Omega {\times (0,\tend)},  \\
		u(\cdot, 0) = u_0, & \textrm{in } \Omega,
		\end{dcases}
\end{equation}
in a bounded, smooth domain $\Omega \subset \mathbb R^N$. Let $u_0 \in L^2 (\Omega)$ and $f \in L^2 ((0, {\tend}) \times \Omega)$. One can construct, as an intermediate step, the solution of the following problem
\begin{equation} \label{eq:inocent heat v}
	\begin{dcases} 
		\frac{\partial v}{\partial t} - \Delta v  = 0  & \textrm{in }  \Omega{\times (0,\tend)}, \\
		v = 0 & \textrm {on } \partial \Omega {\times (0,\tend)},  \\
		v(\cdot,0) = u_0, & \textrm{in } \Omega,
	\end{dcases}
\end{equation}
by considering the decomposition of $L^2 (\Omega)$ in terms of eigenfunctions of $-\Delta$. Let us write the unique solution of \eqref{eq:inocent heat v} as $v (t) = S(t) u_0$. The operator $S(t)$ is a semigroup (see \cite{Brezis:2010}), and has some interesting properties we will not discuss. A solution $u$ of problem the non homogeneous problem \eqref{eq:inocent heat} can be written, for every $t \in [0,\tend]$, as
\begin{equation} \label{eq:inocent variation of constants formula}
	u(t) = S(t) u_0 + \int_0^t S(t-s ) f(s) \ds .
\end{equation}
This kind of solution is known as a ``mild solution''. As in \cite{diaz+vrabie1989compacite+green}, one can define the ``Green operator'' for problem {\eqref{eq:inocent heat}} as the function
\begin{equation}
	G_{\tend} : f \mapsto S(\cdot) u_0 + \int_0^\cdot S(\cdot -s ) f(s) \ds {.}
\end{equation}

In our problem we will need to work with a suitable Green operator associated to each of the equations. {Assuming} Assumptions \ref{hyp:regularity of coefficients}, {we will define several Green operators.\\}

{For any $t_0 > 0$ we define} (see \cite{friedman2008partial}):
		\begin{eqnarray*} 
			G_{\ce, t_0}  :  L^2( (0,L){\times (0,t_0)}) &\to& \mathcal C([0,t_0]{; } H^1(0,L)) , \\
			  f &\mapsto&  V ,
		\end{eqnarray*}
		as the solution of the problem
		\begin{equation*}
			\begin{dcases}
			\partiald {V} t - \frac{\partial}{\partial x} \left( D_e \frac{\partial }{\partial x }  V \right) = 	
			f , & (x,t) \in (0,L) \times (0,t_0),\\
			\partiald {V} {x} (0,t) = \partiald {V} {x} (L,t) = 0, & t\in (0,t_0),\\
			V(x,0) = c_{e,0}(x), & x \in (0,L).
			\end{dcases}
		\end{equation*}

		For {system \eqref{eq:model cs}} we will need to do some extra work due to the fact that the equation is only ``pseudo 2D''. First we define the solution {of problem \eqref{eq:model cs}} for every $x$ fixed
		\begin{eqnarray*}
			\label{defn G cs R x} 
			G_{\cs, R, t_0}  : \mathcal C([0,R]) \times \mathcal C ([0,t_0]) &\to& \mathcal C([0,R]{\times[0,t_0]}), \\
			 (u_0,g) &\mapsto& V, 
		\end{eqnarray*}
		by solving the corresponding problem
		\begin{equation*}
			\begin{dcases}
			\partiald {V} t -  \frac{1}{r^2} \frac{\partial }{\partial r} \left( D_s r^2 V \right) = 	
			0, & (y,t) \in (0,R) \times (0,t_0),\\
			-D_s \partiald {V} {r} (R,t) = g, & t\in (0,t_0),\\
			V(r,0) = u_0(r), & r \in (0,R).
			\end{dcases} 
		\end{equation*}
		The next step is to consider the dependence on $x$. Therefore we construct the {Green operator} {associated to problem \eqref{eq:model cs} collecting all $x \in \J$}:
			\begin{eqnarray*}
			G_{\cs, t_0}  : \mathcal C({ \J \times [0,t_0] }) &\to& \mathcal C({\Done} \times [0,t_0]  ) , \\
			 g &\mapsto& W,
		\end{eqnarray*}
		given by
		\begin{equation*}
			W(r,x{,t}) = G_{\cs, \Rs(x),t_0} (c_{s,0} ({x,\cdot}),g({x,\cdot})) (r, {t}).
		\end{equation*}
		Finally, we consider the Green operator for the  {system \eqref{eq:model T}} as the function
		\begin{eqnarray*}
			G_{T,t_0} : \mathcal C ([0,t_0];Z ) &\to& \mathcal C([0,t_0]) , \\
			 (\ce, \csB, \phie, \phis, T) &\mapsto&  W 
		\end{eqnarray*}
		defined as
		\begin{equation*}
			W(t) =  T_0 + \int_0^t ( - \alpha_T (T(s) - \Tamb) + 
			 \int_0^t  F_T(\ce {(\cdot, s)}, \csB {(\cdot, s)} , \phie{(\cdot, s)}, \phis{(\cdot, s)}, T(s) ))\ds. 
		\end{equation*}
		{{This operator is well-defined and of class $\mathcal C^1$} due to Assumption \ref{hyp:regularity FT}.}
	It will be useful to introduce the following {Nemistkij} operators:
	\begin{eqnarray*}
	N_{j^\Li}  :  K_Z &\to&  \mathcal C ([0,L_1]) \cap \mathcal C ([L_1, L_1 + \delta]) \cap \mathcal C([L_1+\delta, L]) \\
				(\ce, c_{s,B}, \phie, \phis, T) & \mapsto & j^\Li \circ (\ce, c_{s,B}, \phie, \phis, T) 	\\ \nonumber
		\\		
		N_{j^\Li,t_0}  : \mathcal C ([0,t_0]; K_Z) &\to& \mathcal C ([0,t_0]; \mathcal C ([0,L_1]) \cap \mathcal C ([L_1, L_1 + \delta]) \cap \mathcal C([L_1+\delta, L])) \\
		(\ce, c_{s,B}, \phie, \phis, T) & \mapsto & j^\Li \circ (\ce, c_{s,B}, \phie, \phis, T). 					
	\end{eqnarray*}
	It is well known (see, e.g., \cite{henry1981geometric+theory+semilinear+parabolic}) that these operators are locally Lipschitz continuous and $\mathcal C^1$ {(in the sense of {the Fréchet derivative})}, properties that will be used in the proof {of Theorem \ref{thrm:well posedness model}}{,} due to the regularity of the elements of the composition {(i.e. \eqref{eq:jLi continuity} and \eqref{eq:U continuity})}.

	\begin{defn}[Weak-mild solution] \label{defn:solution general problem}
		We define a ``weak-mild solution of \eqref{eq:model ce}--\eqref{eq:model T}'' as {a quintuplet} {$(\ce, \cs, \phie, \phis, T)\in C([0,\tend); K_Z$)} such that there exists $0 < \tend \le \tendI$ for which:
		\begin{enumerate}
			\item {$(\phie,\phis)$ is a} weak solutions of {system \eqref{eq:model phie}--\eqref{eq:model phis}} for {the functions} $\ce, \cs, T$ given {in the quintuplet}, in the sense that, for every $t \in [0, \tend)$ the weak formulations \eqref{eq:weak formulation phie}, \eqref{eq:weak formulation phis} hold.
			
			\item $(\ce, \cs, T)$ {is a mild solutions of the system \eqref{eq:model ce}, \eqref{eq:model cs}, \eqref{eq:model T}} for {the functions} $\phie$, $\phis$ given {in the quintuplet}, in the sense that for every $t_0 < \tend$:
			\begin{align} \label{eq:solution as fixed point in cs}
			(\ce, \cs, T) =& \left(G_{\ce,t_0} \left ({\alphaceeq} N_{j^\Li,t_0}  \right ) , G_{\cs , t_0}  \left ({\alphacseq} N_{j^\Li,t_0}  \right), G_{T,t_0}  \right) \nonumber \\
			&\quad \circ (\ce|_{t<t_0}, \cs |_{R = \Rs(x), t<t_0}, \phie|_{t<t_0} , \phis|_{t<t_0},  T|_{t<t_0}).
			\end{align} 
		\end{enumerate}
	\end{defn}
	\begin{defn}[{Piecewise} weak-mild solution] \label{defn:solution general problem by parts} We define a ``{piecewise} weak-mild solution'' as a quintuplet $(\ce, \cs, \phie, \phis, T)$ such that there exists a partition $\{t_0, \cdots, t_N\}$ of $[0,\tend]$ such that in $[t_i, t_{i+1}]$ $(\ce, \cs, \phie, \phis, T)$ is a weak-mild solution in the previous sense, with $(\ce, \cs, T) (t_i)$ as initial condition in the interval $[t_i, t_{i+1}]$. 
	\end{defn}
	 
	\begin{rem}
		It is well known that {for problems of type \eqref{eq:inocent heat},} any {piecewise} {weak-mild} solution is a weak solution.
	\end{rem}
	
	\begin{defn}\label{defn:extension}
		Given a solution {(in any {of the previous senses})} $(\ce, \cs, \phie, \phis, T) \in \mathcal C([0,a) , K_Z)$, we say that $(\tilde \ce, \tilde\cs, \tilde\phie, \tilde\phis,\tilde T) \in \mathcal C([0,b) , K_Z)$ is an ``extension'' of $(\ce, \cs, \phie, \phis, T)$ if {it is also a solution (in the same sense)}, $b \ge a$ and 
		\begin{equation*}
			(\tilde \ce, \tilde\cs, \tilde\phie, \tilde\phis,\tilde T)|_{t{\le}a} = (\ce, \cs, \phie, \phis, T).
		\end{equation*}
		We say that the extension is ``proper'' if $b > a$.
		We say that an extension is ``maximal'' if it {does not} admit {a} proper extension.
	\end{defn}

	Notice that the contribution of $\cs$ can be studied, basically, as a 1D behaviour on $\J$. {More precisely, we} consider the Green operator {on the boundary of the balls $B_{\Rs(x)}$}, which we will show that contains all the necessary information
	\begin{eqnarray*}
		G_{\csB, t_0} : \mathcal C( {\overline \J \times [0,t_0]}) &\to& \mathcal C ({ \overline \J \times [0,t_0]}), \\
		g &\mapsto& W ,
	\end{eqnarray*}
	where
	\begin{equation*}
		 W({x,t}) = (G_{\cs, t_0} (g))({\Rs(x),x,t}).
	\end{equation*}
	In this sense we can rewrite \eqref{eq:solution as fixed point in cs} in terms of the restriction $\csB$, instead of $\cs$, as follows:

	\begin{prop} {{In} Definitions \ref{defn:solution general problem by parts} and \ref{defn:extension},} condition \eqref{eq:solution as fixed point in cs} is equivalent to the following property: $(\ce, \csB, T) \in \mathcal C([0,\tend) ; X)$ such that
		\begin{align}  \label{eq:solution as fixed point in csB}
			(\ce, c_{s,B}, T) =& \left(G_{\ce,t_0} \left ({\alphaceeq} N_{j^\Li,t_0}  \right ) , G_{\csB , t_0}  \left ({\alphacseq} N_{j^\Li,t_0}  \right), G_{T,t_0}  \right) \nonumber\\
			& \quad \circ (\ce|_{t<t_0}, \csB |_{t<t_0}, \phie|_{t<t_0} , \phis|_{t<t_0},  T|_{t<t_0}){.}
		\end{align} 
	\end{prop}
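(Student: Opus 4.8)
The plan is to establish the equivalence by a direct verification that rests on two structural facts already built into the definitions. First, by \eqref{eq:j Li} the reaction current $j^\Li$, and hence the Nemytskij operator $N_{j^\Li,t_0}$ appearing in \eqref{eq:solution as fixed point in cs}, depends on $\cs$ \emph{only} through its boundary trace $\csB(x,t)=\cs(\Rs(x),t;x)$. Second, by construction $G_{\csB,t_0}$ is nothing but $G_{\cs,t_0}$ post-composed with the evaluation map $W\mapsto W(\Rs(\cdot),\cdot\,,\cdot)$. Combining these two observations turns \eqref{eq:solution as fixed point in cs} and \eqref{eq:solution as fixed point in csB} into reformulations of one another.

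Concretely, for the forward implication one assumes that $(\ce,\cs,\phie,\phis,T)$ satisfies \eqref{eq:solution as fixed point in cs} and sets $\csB:=\cs(\Rs(\cdot),\cdot\,;\cdot)$. Using the first observation, the argument $(\ce|_{t<t_0},\cs|_{R=\Rs(x),t<t_0},\phie|_{t<t_0},\phis|_{t<t_0},T|_{t<t_0})$ of $N_{j^\Li,t_0}$ may be replaced by $(\ce|_{t<t_0},\csB|_{t<t_0},\phie|_{t<t_0},\phis|_{t<t_0},T|_{t<t_0})$ without changing the output. Then the $\ce$- and $T$-components of \eqref{eq:solution as fixed point in cs} are, verbatim, the $\ce$- and $T$-components of \eqref{eq:solution as fixed point in csB}; and evaluating the $\cs$-component at $r=\Rs(x)$ and invoking the definition $W(x,t)=(G_{\cs,t_0}(g))(\Rs(x),x,t)$ yields the $\csB$-component. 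That $(\ce,\csB,T)\in\mathcal C([0,\tend);X)$ is then immediate from the stated mapping properties of $G_{\ce,t_0}$, $G_{\csB,t_0}$ and $G_{T,t_0}$ (recall $H^1(0,L)\hookrightarrow \mathcal C([0,L])$).

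For the converse, suppose $(\ce,\csB,T)$ satisfies \eqref{eq:solution as fixed point in csB} for given $(\phie,\phis)$, and \emph{define}
\[
 \cs := G_{\cs,t_0}\big(\alphacseq\, N_{j^\Li,t_0}(\ce|_{t<t_0},\csB|_{t<t_0},\phie|_{t<t_0},\phis|_{t<t_0},T|_{t<t_0})\big).
\]
By the definition of $G_{\csB,t_0}$ the trace of $\cs$ on $\{r=\Rs(x)\}$ equals $G_{\csB,t_0}$ applied to the same data, which by the second line of \eqref{eq:solution as fixed point in csB} is exactly $\csB$; hence $\cs|_{R=\Rs(x),t<t_0}=\csB|_{t<t_0}$, the two arguments of $N_{j^\Li,t_0}$ coincide, and $(\ce,\cs,T)$ satisfies \eqref{eq:solution as fixed point in cs}. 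Finally, since Definitions \ref{defn:solution general problem by parts} and \ref{defn:extension} are phrased solely in terms of the solution concept restricted to subintervals and of inclusion of graphs, and since the correspondence $\cs\leftrightarrow\csB$ just established is a bijection compatible with restriction in $t$, the same equivalence transfers to piecewise weak-mild solutions and to (proper, maximal) extensions.

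I do not expect a serious obstacle here: the only point requiring a little care is the consistency that the $\cs$ reconstructed in the converse genuinely has $\csB$ as its trace, but this is guaranteed by the very definition of $G_{\csB,t_0}$ as the trace of $G_{\cs,t_0}$. Implicitly one also uses that the microscale parabolic problem \eqref{eq:model cs} is well-posed in the $H^1_r$ framework, so that $G_{\cs,t_0}$, and therefore $G_{\csB,t_0}$, is single-valued with the claimed regularity — but this is already subsumed in the construction of the Green operators.
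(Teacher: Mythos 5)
Your proof is correct, and in the converse direction it takes a genuinely different — and arguably more streamlined — route than the paper. For the backward implication, the paper first reconstructs $\cs$ as the solution of an auxiliary \emph{Dirichlet} boundary-value problem \eqref{eq:parabolic Dirichlet problem cs} with boundary data $\csB$, then observes that $G_{\cs,t_0}\widetilde{j^\Li}$ solves that same Dirichlet problem (because its trace equals $\csB$), and finally invokes uniqueness for the Dirichlet problem to conclude $\cs = G_{\cs,t_0}\widetilde{j^\Li}$. You short-circuit this: you define $\cs$ directly as $G_{\cs,t_0}\bigl(\alphacseq N_{j^\Li,t_0}(\ce,\csB,\phie,\phis,T)\bigr)$, note that by the definition of $G_{\csB,t_0}$ its trace on $\{r=\Rs(x)\}$ is exactly the right-hand side of the $\csB$-component of \eqref{eq:solution as fixed point in csB}, hence equals $\csB$, and conclude that both arguments of the Nemytskij operator coincide. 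The paper's detour through the Dirichlet problem makes explicit that the reconstructed $\cs$ can be characterized independently of the flux, which is perhaps conceptually reassuring, whereas your argument avoids any auxiliary boundary-value problem and any appeal to parabolic uniqueness, relying only on the structural facts that $j^\Li$ sees $\cs$ through its trace and that $G_{\csB,t_0}$ is defined as the trace of $G_{\cs,t_0}$. Both arguments are sound; yours is shorter.
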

	\begin{proof}
		{It is trivial that} \eqref{eq:solution as fixed point in cs} implies \eqref{eq:solution as fixed point in csB}. Let us consider a quintuplet $(\ce, c_{s,B}, \phie, \phis, T)$ that satisfies \eqref{eq:solution as fixed point in csB}. Let us construct a solution $(\ce, \cs, \phie, \phis, T)$ of \eqref{eq:model ce}--\eqref{eq:model T} in the sense of Definition \ref{defn:solution general problem}. Define, for any $t_0 < \tend$
		\begin{equation} \label{eq:coupling nemitskij reconstruction}
		\widetilde {j^\Li} = N_{j^\Li,t_0}  (\ce|_{t<t_0}, \csB|_{t<t_0}, \phie|_{t<t_0}, \phis|_{t<t_0}, T|_{t<t_0}).
		\end{equation}
		Define $\cs (x, r,t )$ to be the solution, for every $x \in \J$, of the problem
		\begin{equation} \label{eq:parabolic Dirichlet problem cs}
			\begin{dcases}
				\partiald{\cs}{t} - D_s  \frac{1}{r^2} \frac{\partial}{\partial r} \left( r^2 \frac{\partial \cs}{\partial r}\right) = 0 ,  & 0 < r < \Rs (x), \\
				\cs = \csB , & r = \Rs (x), \\
				\frac{\partial \cs}{\partial r} = 0, & r = 0, \\
				\cs = c_{\textrm{s},0}, & t = 0.
			\end{dcases}
		\end{equation}
		Since $\cs = \csB$ on $\partial B_{\Rs (x)}$, due to \eqref{eq:coupling nemitskij reconstruction},  we have that
		\begin{equation*}
				\widetilde {j^\Li} (x,t) = j^{\Li} (x,\ce(x,t), \cs (\Rs(x),x,t), \phie(x,t), \phis(x,t), T(t)), \qquad (x,t) \in \J \times (0,t_0) .
		\end{equation*}
		Furthermore, since $\csB = (G_{\cs, t_0} \widetilde {j^\Li})|_{R = \Rs (x)}$ then $\cs$ and $G_{\cs,t_0} \widetilde{j^\Li }$ are solutions of the same parabolic problem \eqref{eq:parabolic Dirichlet problem cs} {and,} by the uniqueness of the solutions,		\begin{equation*}
			\cs = G_{\cs,t_0} \widetilde{j^\Li }.
		\end{equation*}
		Therefore $(\ce, \cs, \phie, \phis, T)$ is a solution of \eqref{eq:model ce}--\eqref{eq:model T} in the sense of Definition \ref{defn:solution general problem}, up to time $t_0${, which can be} arbitrarily close to $\tend$.
	\end{proof}

\subsection{{Assumptions and results regarding Theorem \ref{thrm:well posedness model}}} 

It was first shown in \cite{Ramos:2015:lithiumionbatteries} that the uniqueness of solutions $(\phie,\phis)$ {of system \eqref{eq:model phie}--\eqref{eq:model phis}} holds up to a constant {relating the difference between $\phie$ and $\phis$}. To avoid this we set the following assumption:
\begin{hyp} \label{hyp:constant for phieLi} As in \eqref{eq:defn phieLi}, we define
	\begin{align} \label{eq:definition phieLi}
	\phieLi = \phie - \alpha_{\phie} T f_{\phie} (\ce) ,
	\end{align} 
	and assume that
	\begin{align}
	&\int_{0}^{L} \phieLi \dx =  0. \label{eq:phieLi 0 average}
	\end{align}
	{This can be done because $(\phie, \phis)$ is defined up to a constant}. 
\end{hyp}

{
	\begin{rem}
		We recall that $\mathcal C([a,b]) \subset H^1 (a,b)$. Thus, since $0 < \ce \in H^1 (a,b)$ then, $\min_{[0,L]} \ce > 0$, so $f_{\phie} (\ce) \in H^1 (0,L)$.
	\end{rem}
}

	\begin{rem}
		{Another alternative {to get the uniqueness of solution} is to use} the condition $\phis|_{x = 0} = 0$, {instead of \eqref{eq:phieLi 0 average},} setting the value $0$ of the potential {in one of the walls}.
	\end{rem}

The idea of {the} proof {of Theorem \ref{thrm:well posedness model}} {(which is done in Section \ref{sec:proof of theorem 1})} is the following. First we will show {(see Proposition \ref{prop:resolvent elliptic} {and its proof in Section \ref{sec:existence of Green operator}})} that we can solve \eqref{eq:model phie} and \eqref{eq:model phis} to obtain $(\phie, \phis)$ if $\cs, \ce, T$ {are given}, {extending to the nonlinear case the study for the linearized equation proved in \cite{Ramos:2015:lithiumionbatteries}}. Then we will apply a fixed point argument to the evolution problems \eqref{eq:model ce}, \eqref{eq:model cs} and \eqref{eq:model T} to obtain the conclusion.

\begin{prop} \label{prop:resolvent elliptic}
	Let $(\ce, \csB, T) \in K_X$, {$I\in \mathbb R$}  and let Assumptions \ref{hyp:regularity of coefficients}, \ref{hyp:regularity flux general} hold. Then there exists $\phie \in H^1(0,L)$ and $\phi_{s} \in H^1 (\J)$ satisfying the elliptic equations \eqref{eq:model phie} and \eqref{eq:model phis} in the weak sense \eqref{eq:weak formulation phie} and  \eqref{eq:weak formulation phis}.  Furthermore, given two solutions $(\phie, \phis), (\hat \phie, \hat \phis)$ there exists a constant $C \in \mathbb R$ {such that}
	$$\phie - \widehat \phie = \phis - \widehat \phis = {C}. $$
	Hence we have uniqueness up to a constant. In particular, there exists a unique solution $(\phie, \phis)$ satisfying Assumption \ref{hyp:constant for phieLi}.
\end{prop}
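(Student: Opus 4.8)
The plan is to recast the coupled elliptic system for $(\phie,\phis)$, with $(\ce,\csB,T)\in K_X$ frozen, as a single nonlinear equation driven by a \emph{monotone} operator on the space $\XPhi$, solve it by the Browder--Minty theorem, and extract uniqueness (up to a constant) from the strict monotonicity. The first move is the substitution $\phieLi=\phie-\alpha_{\phie}Tf_{\phie}(\ce)$. Since $(\ce,\csB,T)\in K_X$, $\ce\in H^1(0,L)$ is continuous and bounded below by a positive constant, $\csB\in\mathcal C(\overline{\J})$ is bounded away from $0$ and $\csmax$, and $T>0$; hence, using $f_{\phie}\in\mathcal C^2$ from Assumption~\ref{hyp:regularity of coefficients} and \eqref{eq:U continuity}, the quantity $g(x):=\alpha_{\phie}Tf_{\phie}(\ce(x))+U(x,\ce(x),\csB(x),T)$ is a \emph{fixed} continuous function on $\overline{\J}$, and the surface overpotential reduces to $\eta=\phis-\phieLi-g(x)$. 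Thus $j^{\Li}=J(x,\phis-\phieLi)$ on $\J$ and $0$ off $\J$, where $J(x,s):=\overline{j}^{\Li}(x,\ce(x),\csB(x),T,s-g(x))$; by \eqref{eq:monotonicity of jLi with respect to eta} the map $s\mapsto J(x,s)$ is strictly increasing for each $x$. Rewriting the weak formulations \eqref{eq:weak formulation phie}--\eqref{eq:weak formulation phis} through $\phieLi$ (the $\ce$-term in \eqref{eq:weak formulation phie} becomes a fixed bounded linear functional of the test function, which I move to the right-hand side) and adding them leads to: find $(\phieLi,\phis)\in\XPhi$ such that
\begin{multline*}
\int_0^L\kappa\,\partial_x\phieLi\,\partial_x\psi_e\dx+\int_{\J}\sigma\,\partial_x\phis\,\partial_x\psi_s\dx\\
-\int_{\J}J(x,\phis-\phieLi)\,(\psi_e-\psi_s)\dx=\frac{I}{A}\,(\psi_s(0)-\psi_s(L))
\end{multline*}
for all $(\psi_e,\psi_s)\in H^1(0,L)\times H^1(\J)$. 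Both sides annihilate the direction $(1,1)$ and the left-hand side is invariant under $(\phieLi,\phis)\mapsto(\phieLi+c,\phis+c)$, so the problem descends to the quotient by $\mathbb{R}(1,1)$, for which $\XPhi$ (the constraint $\int_0^L u\dx=0$ selecting one representative per class) is exactly the natural space; the two original equations are then recovered by testing the summed identity with $(\psi_e,0)$ and $(0,\psi_s)$.

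Define $\mathcal A:\XPhi\to\XPhi^{*}$ by the left-hand side above and $\ell\in\XPhi^{*}$, $\langle\ell,(\psi_e,\psi_s)\rangle=\frac{I}{A}(\psi_s(0)-\psi_s(L))$, which is bounded because point evaluation is continuous on $H^1(\J)$. I would verify the Browder--Minty hypotheses. Boundedness on bounded sets and hemicontinuity follow from the principal part being bounded and linear and from the coupling term being a composition with the Nemytskij operator $N_{j^{\Li}}$, which, by \eqref{eq:jLi continuity}, \eqref{eq:U continuity} and the one-dimensional embedding $H^1\hookrightarrow\mathcal C$, is locally Lipschitz and $\mathcal C^1$ into $\mathcal C(\overline{\J})$. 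For monotonicity I test two elements against their difference: the principal part contributes $\int_0^L\kappa|\partial_x(\phieLi^1-\phieLi^2)|^2\dx+\int_{\J}\sigma|\partial_x(\phis^1-\phis^2)|^2\dx\ge 0$, and, writing $\eta^i=\phis^i-\phieLi^i-g$ so that $\eta^1-\eta^2=(\phis^1-\phis^2)-(\phieLi^1-\phieLi^2)$, Remark~\ref{rem:monotonicity F Li} gives
\begin{multline*}
-\int_{\J}\big(J(x,\phis^1-\phieLi^1)-J(x,\phis^2-\phieLi^2)\big)\,\big[(\phieLi^1-\phieLi^2)-(\phis^1-\phis^2)\big]\dx\\
=\int_{\J}F^{\Li}\,|\eta^1-\eta^2|^2\dx\ge 0,
\end{multline*}
so $\mathcal A$ is monotone. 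Equality forces $\partial_x(\phieLi^1-\phieLi^2)\equiv 0$ (hence, by $\int_0^L(\cdot)\dx=0$ and Poincar\'e--Wirtinger, $\phieLi^1=\phieLi^2$), $\partial_x(\phis^1-\phis^2)\equiv 0$ on each component of $\J$, and, since $F^{\Li}>0$, $\eta^1\equiv\eta^2$, whence $\phis^1=\phis^2$: $\mathcal A$ is strictly monotone, which yields uniqueness in $\XPhi$.

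The one genuinely delicate step — and the main obstacle — is coercivity, $\langle\mathcal A u,u\rangle/\|u\|_{\XPhi}\to+\infty$ as $\|u\|_{\XPhi}\to\infty$. The principal part gives $\kappa_0\|\partial_x\phieLi\|_{L^2(0,L)}^2+\sigma_0\|\partial_x\phis\|_{L^2(\J)}^2$, which by Poincar\'e--Wirtinger controls $\|\phieLi\|_{H^1(0,L)}$ but leaves the constant modes of $\phis$ on the two components of $\J$ uncontrolled; that control must come from $\int_{\J}J(x,\phis-\phieLi)(\phis-\phieLi)\dx$, which by strict monotonicity of $J(x,\cdot)$ and the pointwise bound $J(x,s)\,s\ge J(x,0)\,s$ dominates the lower-order linear term $\int_{\J}J(x,0)(\phis-\phieLi)\dx$. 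To close the estimate one exploits the absorption/competitive structure of the pair — equivalently, one first derives an \emph{a priori} $L^\infty$ bound on $\phis-\phieLi$ (hence on $\eta$) by comparison with suitable constant sub- and supersolutions built from the monotonicity of $\overline{j}^{\Li}$ in $\eta$ and the boundary flux $I/A$, which confines the problem to a ball — or else one invokes the natural growth $\overline{j}^{\Li}(x,\ce,\csB,T,\eta)\to\pm\infty$ as $\eta\to\pm\infty$, as holds for the Butler--Volmer flux \eqref{eq:Butler-Volmer start}--\eqref{eq:Butler-Volmer end} and the logarithmic models. With coercivity in hand, Browder--Minty produces a (by strict monotonicity, unique) $u^{*}=(\phieLi^{*},\phis^{*})\in\XPhi$ with $\mathcal A u^{*}=\ell$ in $\XPhi^{*}$; since both sides annihilate $(1,1)$, the identity extends to all test pairs in $H^1(0,L)\times H^1(\J)$, so $(\phie^{*},\phis^{*})$ with $\phie^{*}:=\phieLi^{*}+\alpha_{\phie}Tf_{\phie}(\ce)$ solves \eqref{eq:weak formulation phie}--\eqref{eq:weak formulation phis}.

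Finally, for uniqueness up to a constant, given two weak solutions $(\phie,\phis)$, $(\widehat\phie,\widehat\phis)$ (not assumed to satisfy Assumption~\ref{hyp:constant for phieLi}), I subtract the weak formulations, test with $(\phieLi-\widehat\phieLi,\phis-\widehat\phis)$ and re-run the computation above — now with no average constraint — obtaining $\int_0^L\kappa|\partial_x(\phieLi-\widehat\phieLi)|^2\dx+\int_{\J}\sigma|\partial_x(\phis-\widehat\phis)|^2\dx+\int_{\J}F^{\Li}|\eta-\widehat\eta|^2\dx=0$. Hence all three terms vanish: $\phieLi-\widehat\phieLi\equiv a$ is constant on $(0,L)$, $\phis-\widehat\phis$ is constant on each component of $\J$, and $F^{\Li}>0$ forces each of those constants to equal $a$. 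Therefore $\phie-\widehat\phie=\phieLi-\widehat\phieLi=a$ on $(0,L)$ and $\phis-\widehat\phis=a$ on $\J$, i.e. $\phie-\widehat\phie=\phis-\widehat\phis=C$ with $C:=a$. Exactly one member of this one-parameter family satisfies $\int_0^L\phieLi\dx=0$, namely $(\phie^{*},\phis^{*})$, which is thus the unique weak solution satisfying Assumption~\ref{hyp:constant for phieLi}.
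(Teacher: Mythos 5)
Your overall strategy is the paper's: reduce to $\phieLi$, add the two weak formulations, pose the problem in $\XPhi$, and apply Browder--Minty, with monotonicity coming from Remark \ref{rem:monotonicity F Li} and strict monotonicity giving uniqueness up to a constant (which you verify directly rather than quoting \cite{Ramos:2015:lithiumionbatteries}; that part, as well as the recovery of the two separate equations from the summed identity via the compatibility $\int_\J j^\Li \dx =0$ obtained by testing with $(0,1)\in\XPhi$, is fine). The genuine gap is exactly at the step you flag: coercivity. From monotonicity you only extract the linear lower bound $J(x,s)s\ge J(x,0)s$, you correctly observe that the gradient terms plus Poincar\'e--Wirtinger do not control the constant modes of $\phis$ on the two components of $\J$, and you then defer to extra structure --- an a priori $L^\infty$ bound on $\phis-\phieLi$ via sub/supersolutions, or growth of $\overline j^\Li$ at infinity ``as for Butler--Volmer''. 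Neither is available under the hypotheses of the Proposition: Assumption \ref{hyp:regularity flux general} gives only $\partial\overline j^\Li/\partial\eta>0$ with no growth condition, and the comparison-function route is not carried out (and would anyway require truncating the nonlinearity before Browder--Minty can be applied on all of $\XPhi$). So, as written, the existence part of your argument is not closed.

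The paper closes this step with a quantitative, not structural, observation that you are missing. First normalize the flux: set $\widehat{j^\Li}(x,\Phi)=\underline j^\Li(x,\Phi)-j^\Li_0(x)$ as in \eqref{eq:defn j Li hat}--\eqref{eq:defn j Li zero}, moving the fixed function $j^\Li_0\in L^\infty(0,L)\cap\mathcal C(\overline\J)$ to the right-hand side, so that the nonlinear operator $A_1$ vanishes at $(0,0)$. Then apply the identity \eqref{eq:coercivity} of Remark \ref{rem:monotonicity F Li} with the comparison point corresponding to $\phis-\phieLi=0$: this yields the quadratic bound $\langle A_1(\phieLi,\phis),(\phieLi,\phis)\rangle \ge C\int_\J |\phis-\phieLi|^2\dx$ with $C>0$, i.e. \eqref{eq:coercivity of T1} with second argument zero. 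That quadratic term is precisely what controls the constants of $\phis$ on each component of $\J$ (write $\phis=(\phis-\phieLi)+\phieLi$ and use Poincar\'e--Wirtinger, valid since $\int_0^L\phieLi\dx=0$ in $\XPhi$, to control $\phieLi$), so together with $\kappa_0\|\partial_x\phieLi\|_{L^2(0,L)}^2+\sigma_0\|\partial_x\phis\|_{L^2(\J)}^2$ it dominates $\|(\phieLi,\phis)\|_{\XPhi}^2$, and coercivity of $\mathcal A$ follows with no $L^\infty$ estimate and no exponential growth of the flux. (The paper is admittedly terse about the uniformity of this constant as the argument grows, but with the normalization above the comparison point is fixed at zero, which is the intended reading; your linear bound $J(x,s)s\ge J(x,0)s$ simply discards the quadratic information contained in $F^\Li>0$.) Apart from this step, your proof matches the paper's.
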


Due to this proposition we {k}now that {the following functions are well defined:}
\begin{eqnarray}
	G_{\phi}: \quad K_X \times {\mathbb R} &\to& H^1(0,L) \times H^1(\J) \nonumber \\
		(\ce, \csB, T, I) &\mapsto& (\phie, \phis), \label{eq:defn Gphi}\\
	\widetilde G_{\phi}: \quad K_X \times {\mathbb R} &\to& K_Z \nonumber \\
	(\ce, \csB, T, I) &\mapsto& (\ce, \csB, G_\phi(\ce, \csB, T, I), T),
\end{eqnarray}
where $(\phie, \phis) \in {H^1(0,L) \times H^1(\J)}$ {is} the {(unique)} solution of \eqref{eq:weak formulation phie}--{\eqref{eq:weak formulation phis}} {satisfying} \eqref{eq:phieLi 0 average}. Assuming {now that} $I$ is a continuous function{, i.e.} $I \in \mathcal C([0,\tendI])$, we define the Green {operator}, for $t_0 < \tendI$: 
\begin{eqnarray*}
	{\widetilde  G_{\phi,t_0} }: \mathcal C([0,{t_0}];K_X) &\to& \mathcal C ([0,{t_0}];K_Z)\\
		(\ce, \csB, T) &\mapsto& W, 
\end{eqnarray*}
where
\begin{equation*}
	 W(t) = { \widetilde G_{\phi} } (\ce(t), \csB(t), T(t), I(t)).
\end{equation*}
In Section \ref{sec:existence of Green operator} we will prove that
\begin{prop}\label{prop:resolvent elliptic Lipschitz}
	Let Assumptions \ref{hyp:regularity of coefficients}, \ref{hyp:regularity flux general} hold. Then, the operator {$\widetilde G_\phi : K_X \times \mathbb R \to K_Z$} is $C^1$ {(in the sense of {the Fréchet derivative})}.
\end{prop}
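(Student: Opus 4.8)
The plan is to obtain Proposition~\ref{prop:resolvent elliptic Lipschitz} from the Banach-space implicit function theorem applied to the weak elliptic system \eqref{eq:weak formulation phie}--\eqref{eq:weak formulation phis}. First I would reduce to $G_\phi$: since $\widetilde G_\phi(\ce,\csB,T,I)=(\ce,\csB,G_\phi(\ce,\csB,T,I),T)$ and the maps $(\ce,\csB,T,I)\mapsto\ce,\csB,T$ are continuous linear (hence $\mathcal C^\infty$), $\widetilde G_\phi$ is $\mathcal C^1$ iff $G_\phi$ is. It is convenient to use as unknown the pair $(\phieLi,\phis)$ instead of $(\phie,\phis)$: by \eqref{eq:definition phieLi}, $\phie=\phieLi+\alpha_{\phie}Tf_{\phie}(\ce)$, and $(\ce,T)\mapsto\alpha_{\phie}Tf_{\phie}(\ce)$ is $\mathcal C^1$ from $H^1(0,L)\times\mathbb R$ into $H^1(0,L)$ (Assumption~\ref{hyp:regularity of coefficients}, $f_{\phie}\in\mathcal C^2$, $\min_{[0,L]}\ce>0$), so $G_\phi$ is $\mathcal C^1$ precisely when $(\ce,\csB,T,I)\mapsto(\phieLi,\phis)$ is. In terms of $\phieLi$ the system reads
\begin{equation*}
\int_0^L\kappa(\ce,T)\,\phieLi'\,\psi_e'\dx=\int_{\J}j^\Li\psi_e\dx,\qquad\int_{\J}\sigma\,\phis'\,\psi_s'\dx+\int_{\J}j^\Li\psi_s\dx=\frac{I}{A}\big(\psi_s(0)-\psi_s(L)\big),
\end{equation*}
for all $\psi_e\in H^1(0,L)$, $\psi_s\in H^1(\J)$, with $j^\Li=\overline{j}^\Li(x,\ce,\csB,T,\eta)$ on $\J$ and $\eta=\phis-\phieLi-\alpha_{\phie}Tf_{\phie}(\ce)-U(x,\ce,\csB,T)$. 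By Assumption~\ref{hyp:constant for phieLi} I impose $\int_0^L\phieLi\dx=0$, i.e.\ I look for $(\phieLi,\phis)$ in $\XPhi$; by Proposition~\ref{prop:resolvent elliptic} there is a unique such solution $u^*(p)$ for every $p=(\ce,\csB,T,I)$ in the open set $\mathcal P:=K_X\times\mathbb R\subset X\times\mathbb R$.

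Next I would define $\mathcal F:\mathcal P\times\XPhi\to\XPhi^*$ by pairing the difference of the two sides of the system above against test pairs $(\psi_e,\psi_s)\in\XPhi$. Testing the second identity against the locally constant functions $\mathbf 1_{(0,L_1)},\mathbf 1_{(L_1+\delta,L)}\in H^1(\J)$ --- which, coupled with $0\in H^1(0,L)$, belong to $\XPhi$ --- recovers $\int_0^{L_1}j^\Li=I/A$ and $\int_{L_1+\delta}^Lj^\Li=-I/A$, whose sum $\int_{\J}j^\Li=0$ is exactly the solvability condition of the Neumann problem for $\phieLi$; hence $\mathcal F(p,\cdot)=0$ in $\XPhi^*$ is equivalent to the full weak system \eqref{eq:weak formulation phie}--\eqref{eq:weak formulation phis}, and $u^*(p)$ is its only zero in $\XPhi$. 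It then remains to check that $\mathcal F\in\mathcal C^1(\mathcal P\times\XPhi;\XPhi^*)$ and that $D_{\XPhi}\mathcal F(p,u^*(p))$ is a topological isomorphism, and to invoke the implicit function theorem.

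For the $\mathcal C^1$-regularity of $\mathcal F$: the two bilinear terms define bounded operators $H^1(0,L)\to H^1(0,L)^*$, $H^1(\J)\to H^1(\J)^*$ depending $\mathcal C^1$ --- in fact affinely --- on $\kappa(\ce,T)\in\mathcal C([0,L])$ and on $\sigma$, and $(\ce,T)\mapsto\kappa(\ce,T)$ is $\mathcal C^1$ by Assumption~\ref{hyp:regularity of coefficients}; the term containing $f_{\phie}(\ce)$ is treated the same way. For the remaining terms, the map $\mathcal P\times\XPhi\ni(p,(\phieLi,\phis))\mapsto(\ce,\csB,\phie,\phis,T)\in K_Z$ is $\mathcal C^1$ (again using $\mathcal C^1$-regularity of $(\ce,T)\mapsto\alpha_{\phie}Tf_{\phie}(\ce)$), the Nemytskij operator $N_{j^\Li}:K_Z\to\mathcal C([0,L_1])\cap\mathcal C([L_1,L_1+\delta])\cap\mathcal C([L_1+\delta,L])$ is $\mathcal C^1$ by Assumption~\ref{hyp:regularity flux general} (the regularity \eqref{eq:jLi continuity}, \eqref{eq:U continuity} of $\overline{j}^\Li$ and $U$) together with $H^1\hookrightarrow\mathcal C$, and composition with the continuous linear embedding of this flux space into $H^1(0,L)^*$ and $H^1(\J)^*$ via $g\mapsto\int_{\J}g(\cdot)\dx$ preserves $\mathcal C^1$-regularity; the chain rule then gives $\mathcal F\in\mathcal C^1$.

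Finally, the partial derivative of $\mathcal F$ in the $\XPhi$ variable at $u^*(p)$ is the bounded symmetric bilinear form on $\XPhi$
\begin{equation*}
\mathcal B\big((\dot u_1,\dot u_2),(\varphi,\chi)\big)=\int_0^L\kappa\,\dot u_1'\varphi'\dx+\int_{\J}\sigma\,\dot u_2'\chi'\dx+\int_{\J}a\,(\dot u_2-\dot u_1)(\chi-\varphi)\dx,
\end{equation*}
with $a(x):=\frac{\partial\overline{j}^\Li}{\partial\eta}(x,\ce,\csB,T,\eta^*(x))$ (the signs coming from $\partial_{\phieLi}\eta=-1$, $\partial_{\phis}\eta=1$). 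By \eqref{eq:monotonicity of jLi with respect to eta}, $a>0$, and since $a$ is continuous while its arguments range over a compact set --- because $(\ce,\csB,T)\in K_X$ forces $\ce\in H^1\hookrightarrow\mathcal C$ bounded away from $0$, $\csB\in\mathcal C(\overline\J)$ bounded away from $0$ and $\csmax$, $T>0$, and $\eta^*$ is continuous on the compact closures of the two components of $\J$ --- we get $0<a_0\le a\le a_1$. Then $\mathcal B$ is coercive on $\XPhi$: from $\kappa\ge\kappa_0$, $\sigma\ge\sigma_0$, $a\ge a_0$ the diagonal value $\mathcal B((\dot u_1,\dot u_2),(\dot u_1,\dot u_2))$ controls $\|\dot u_1'\|_{L^2(0,L)}$, $\|\dot u_2'\|_{L^2(\J)}$ and $\|\dot u_2-\dot u_1\|_{L^2(\J)}$; the constraint $\int_0^L\dot u_1\dx=0$ with Poincar\'e--Wirtinger bounds $\|\dot u_1\|_{H^1(0,L)}$, and $\|\dot u_2\|_{L^2(\J)}\le\|\dot u_2-\dot u_1\|_{L^2(\J)}+\|\dot u_1\|_{L^2(0,L)}$ then bounds $\|\dot u_2\|_{H^1(\J)}$, so $\mathcal B$ dominates $\|(\dot u_1,\dot u_2)\|_{\XPhi}^2$. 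As $\XPhi$ is a Hilbert space, Lax--Milgram makes $D_{\XPhi}\mathcal F(p,u^*(p)):\XPhi\to\XPhi^*$ an isomorphism, so the implicit function theorem gives that $p\mapsto u^*(p)$ is $\mathcal C^1$ near each point of $\mathcal P$; global uniqueness (Proposition~\ref{prop:resolvent elliptic}) glues these into one $\mathcal C^1$ map, and undoing the substitution yields $G_\phi\in\mathcal C^1$, hence $\widetilde G_\phi\in\mathcal C^1$. The hard part will be the first step --- propagating the smoothness of $\kappa$, $f_{\phie}$, $\overline{j}^\Li$ and $U$ through the several Nemytskij operators and the affine substitution $\phieLi\leftrightarrow\phie$ into genuine Fréchet-$\mathcal C^1$-regularity of $\mathcal F$ with values in $\XPhi^*$; by contrast, once the setting is fixed the invertibility of the linearisation is immediate from Lax--Milgram, the strict monotonicity \eqref{eq:monotonicity of jLi with respect to eta} being exactly what closes the coercivity estimate.
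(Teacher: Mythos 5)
Your proof follows the paper's own strategy: substitute $(\phieLi,\phis)$ for $(\phie,\phis)$, formulate the weak system as a $\mathcal C^1$ operator equation $\mathcal F(p,u)=0$ with values in $\XPhi^*$, show the partial derivative in the $\XPhi$-direction is a Lax--Milgram isomorphism (coercivity from $\kappa\ge\kappa_0$, $\sigma\ge\sigma_0$, $\partial_\eta\overline{j}^\Li>0$ and Poincar\'e--Wirtinger), and invoke the Banach-space Implicit Function Theorem. The only cosmetic difference is that the paper abstracts the coefficient $\tilde\kappa=\kappa(\ce,T)$ into a separate coordinate of an intermediate space $\widehat X=\mathcal C_{>\kappa_0}([0,L])\times\mathbb R\times K_X$ (so the corresponding term in $F$ is multilinear) and then composes with the $\mathcal C^1$ map $(\ce,\csB,T,I)\mapsto(\kappa(\ce,T),I,\ce,\csB,T)$, whereas you work directly with the parameters $(\ce,\csB,T,I)$; the substance is identical.
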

{
	\begin{rem}
		Since we will allow for charge and discharge cycles, we allow for $I$ to be piecewise continuous, and this is why we define the {piecewise} {weak-}mild solution {(see Definition \ref{defn:solution general problem by parts})}.
	\end{rem}
}

The proof of the local existence of solutions will be based on finding a unique fixed point{, in $\mathcal C([0,t_0];K_X)$ for $t_0$ small enough, of} the operator problem
\begin{align*}		
	(\ce, \csB, T) =& \left(G_{\ce,t_0} \left (\alphaceLi N_{j^\Li,t_0}  \right ) , G_{{\csB} , t_0}  \left (\alphacsLi N_{j^\Li,t_0}  \right), G_{T,t_0}  \right) \circ  { \widetilde G_{\phi,t_0} } (\ce, \csB ,  T).
	\end{align*}

\subsection{{Assumptions and remarks regarding Theorem \ref{thrm:blow up behaviour}}}

In Theorem \ref{thrm:well posedness model} one of the reasons of a finite existence time could be that $\csB \to 0, \csmax$ or $\ce \to 0$. This conditions do not, a priori, pose a relevant physical problem since the battery may very well be completely full or empty. However, the generality of our setting allows for no better statement. Let us study the case which seems to be the most relevant for the modeling of Lithium-ion batteries {by considering new assumptions}.\\

\begin{hyp} \label{hyp:constant coefficients}
	There exists a constant $\kappa_1$ such that $\kappa (\ce, T) \le \kappa_1$ and $f_{\phie} (\cdot) = \ln (\cdot) $.
\end{hyp}

\begin{hyp} \label{hyp:csmax less 1}
	$
	\csmax  < 1
	$ {in the units considered to solve the problem}.
\end{hyp} 
This is purely technical, but it seems reasonable since, in empirical cases in the literature, typically $\csmax \sim 10^{-2} \textrm{ mol } \textrm{cm}^{-3}$. In particular, in \cite{smith+wang2006diffusion+limitation+lithium} the authors take $\csmax = 1.6 \times 10^{-2} {\textrm{ mol } \textrm{cm}^{-3}}$.

Let us consider the following special nonlinear terms {(which, for a broader generality, include some extra constants with respect to \eqref{eq:Butler-Volmer start}--\eqref{eq:Butler-Volmer end})}, {which we will take as assumption in Theorem \ref{thrm:blow up behaviour}}

	\begin{hyp} \label{hyp:flux ramos please}
		We assume that
	{
	\begin{align*}
	\bar j^{\Li} {(x, \ce, \cs, T, \eta)} &= \ce^{\alphaceLi} \cs^{\alphacsLi} (\csmax - \cs)^{\betacs} h \left ({x,} \frac{\eta}{T}  \right ) ,\\
	h{(x,s)} &= h_+ {(x,s)} - h_- {(x,s)}, \\
	h_+{(x,s)} &= { \delta_1 (x) }\exp ( \gamma_1 s) , \\
	h_- {(x,s)} &= { \delta_2 (x) } \exp ({-}\gamma_2 s), \\
	\eta(x, \ce, \cs, \phie, \phis, T) &=  \phis - \phie - U(x,\ce, \cs,T) . 
	\end{align*}
	}{where $\alphaceLi,\alphacsLi,\betacs \in (0,1)$, $\gamma_1, \gamma_2 > 0$} {and {$\delta_1(x), \delta_2(x) > 0$ are} constant in each electrode.} {Notice that $\gamma_1, \gamma_2$ are constant but not dimensionless.}
	{Furthermore, we} consider $U$ slightly more general than \eqref{eq:open circuit potential}:
	\begin{align} \label{eq:open circuit potential general}
	U( {x}, \ce ,  \cs,  T) &= - \lambda_{\min}({x}, T) \ln  \cs + \lambda_{\max}({x}, T) \ln (\csmax -  \cs) + \mu({x}, T) \ln \ce + p( {\ce, \cs}, T), 
	\end{align}
	where $\lambda_{\min}, \lambda_{\max}, \mu$ are given smooth nonnegative scalar functions and $p${ is a continuous bounded function } {$[0,+\infty) \times [0, \csmax] \times (0,+\infty) \to \mathbb R$ with} global bound {denoted} {by} $\| p \|_{L^\infty}$ {(Figure \ref{fig:U} shows a typical graph of $U$)}.
	\end{hyp}
{
	\begin{rem}
	It is a routine matter to check that this $\overline{j}^\Li$ satisfies Assumption \ref{hyp:regularity flux general}.
	\end{rem}
}

\begin{figure}[H]
	\centering
	\includegraphics[]{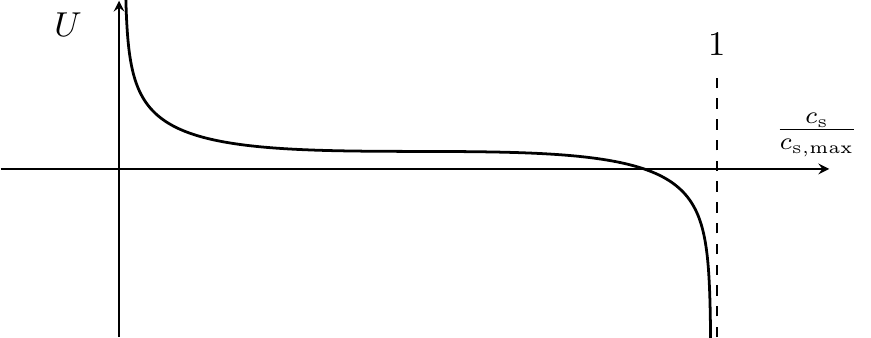}
	\caption{Possible representation of $U$}
	\label{fig:U}
\end{figure}

\begin{rem} 
	Under Assumption \ref{hyp:flux ramos please} we can write $\overline{j}^\Li {(x, \ce, \cs, T, \eta(x, \ce, \cs, \phie, \phis, T))} =  \overline{j}^\Li_+ - \overline{j}^\Li_-$, {where}
	\begin{align}
	\bar j^{\Li}_+ & =  {\delta_1 (x)} \ce^{\alphaceLi} \cs^{\alphacsLi} (\csmax - \cs)^{\betacs} \exp \left ( \frac {\gamma_1} T \left( \phis - \phie - U({x},{\ce},\cs,T) \right) \right ), \label{eq:jLi+ intermediate}\\
	\intertext{and}
	\bar j^{\Li}_- & =   {\delta_2 (x)} \ce^{\alphaceLi} \cs^{\alphacsLi} (\csmax - \cs)^{\betacs} \exp \left ( -\frac {\gamma_2} T \left( \phis - \phie - U({x}, \ce, \cs,T) \right) \right ) \label{eq:jLi- intermediate}. 
	\end{align}	
	Substituting \eqref{eq:open circuit potential general} into \eqref{eq:jLi+ intermediate}-\eqref{eq:jLi- intermediate} we have 
	\begin{align} 
		\overline{j}^\Li_+ & =  { \delta_1 (x)} \ce^{\alphaceLi - \gamma_1 \left( \alpha_{\phie} + \frac{\mu ({x},T)}{ T} \right)} \cs^{\alphacsLi  + \gamma_1 \frac{\lambda_{\min} ({x},T)}{T}} (\csmax - \cs)^{\betacs - \gamma_1 \frac{\lambda_{\max}({x},T)}{T}} \nonumber \\ 
		&\qquad \times \exp \left (\frac {\gamma_1} T (\phis - \phieLi ) \right ) \exp \left( \frac{{-}\gamma_1}{T}p (\ce, \cs, T)\right), \label{eq:fLi plus}\\
		\overline{j}^\Li_- & = { \delta_1(x)}\ce^{\alphaceLi + \gamma_2 \left( \alpha_{\phie} + \frac{\mu ({x},T)}{ T} \right)} \cs^{\alphacsLi  - \gamma_2 \frac{\lambda_{\min} ({x},T)}{T}} (\csmax - \cs)^{\betacs + \gamma_2 \frac{\lambda_{\max}({x},T)}{T}} \nonumber \\ 
		&\qquad \times \exp \left (\frac {-\gamma_2} T (\phis - \phieLi ) \right ) \exp \left( \frac{\gamma_2}{T}p (\ce, \cs, T)\right),  \label{eq:fLi minus}
	\end{align}
	\end{rem}

{On the exponents we will consider the following assumption {(see Remark \ref{rem:dead core})}:}

\begin{hyp} \label{hyp:bounds exponents}
	For all $ T > 0$ {and $x\in [0,L]$}
	\begin{align} 
	\label{eq:blow up potentials first condition}
	\alphaceLi - \gamma_1 \left(  \alpha_{\phie}  + \frac{\mu ({x},T)}{ T}  \right) & \le 1 \\
	\alphaceLi + \gamma_2 \left ( \alpha_{\phie}  + \frac{\mu ({x},T)}{ T} \right)  & \ge 1  \\
	\alphacsLi + \gamma_1 \frac{\lambda_{\min} ({x},T)}{  T} & \ge  1, \qquad  \\
	\label{eq:blow up potentials last condition}
	\betacs + \gamma_2 \frac{\lambda_{\max} ({x},T)}{  T} & \ge 1.
	\end{align}
\end{hyp}

\begin{rem} 
	Notice that the conditions \eqref{eq:blow up potentials first condition}-\eqref{eq:blow up potentials last condition} can be {also} written as:
	\begin{gather}
		\lambda_{\min} ({x},T) \ge  \frac{1 - \alphacsLi }{\gamma_1}  T > 0, \qquad \lambda_{\max}  (x, T ) \ge \frac{1 - \betacs}{\gamma_2}  T > 0, \label{eq:no blow up condition exponents 1}\\ 
		{\alpha_{\phie} + \frac{\mu ({x},T)}{ T}  \ge  \frac{1 - \alphaceLi }{\gamma_2} , \qquad \alpha_{\phie} + \frac{\mu ({x},T)}{ T}  \ge \frac{\alphaceLi - 1}{\gamma_1} }\label{eq:no blow up condition exponents 2}.
	\end{gather}
	{ Theorem \ref{thrm:blow up behaviour} gives a sufficient condition so that, under the structural consideration of a potential $U$ similar to the one considered by Ramos-Please \cite{ramos+please2015arxiv}, should there be a blow-up of the solution, this is not caused by the non-physical behaviour $\ce \to 0, +\infty$ or $\cs \to 0, \csmax$ as $t \to \tend$. Remark \ref{rem:dead core} gives an intuition of why this happens. On the other hand, if this conditions are not satisfied, then the toy models \eqref{eq:inocent semilinear root}-\eqref{eq:inocent semilinear power} suggest that if the sufficient conditions in Theorem \ref{thrm:blow up behaviour} are not satisfied, then it could happen that $\ce \to 0, +\infty$ or $\cs \to 0, \csmax$. The suggestion of the potential in Ramos-Please \cite{ramos+please2015arxiv} was motivated by physical consideration, and not by the mathematical theory. Nonetheless, this seems to be the exact right structure for the mathematical theory. This is a notion of robustness to this proposal. } 
	
	{Considering the parameters used by some authors (see, e.g., \cite{smith+wang2006diffusion+limitation+lithium})}
	\begin{gather}
	\alphacsLi = \betacs = \alphaceLi = \frac 1 2 \\
		\gamma_1 = \frac{\hat \alpha_a F}{R} = 5805.5 \ \rm { C\, K \,  {\rm J^{-1}}} \qquad \qquad 
		\gamma_2 = \frac{\hat \alpha_c F}{R} = 5805.5 \ \rm { C\, K \,  {\rm J^{-1}}}
	\end{gather}
	(where $\hat \alpha_a,  \hat \alpha_c$ are non-dimensional charge transfer coefficients, $F$ is {the} Faraday's constant and $R$ is the universal gas constant), {we have
	\begin{equation}
		\frac{1 - \alphacsLi }{\gamma_1} = \frac{1 - \betacs}{\gamma_2} = \frac{1 - \alphacsLi}{\gamma_1} = \frac{1 - \alphacsLi}{\gamma_2} =  8,61 \times 10^{-5} { \rm  { J\ C}^{-1} \, \rm{K}^{-1} }.
	\end{equation}
	}{Considering \eqref{eq:open circuit potential}, as in \cite{ramos+please2015arxiv}, we have 
	\begin{equation}
		\lambda_{\min} = \alpha(x) T, \quad \lambda_{\max}  = \beta(x) T , \quad  \mu (x) = \gamma (x) T.
	\end{equation}
	And so Assumption \ref{hyp:bounds exponents} translates into
	\begin{equation}
		\alpha(x) , \beta(x) \ge 8.61 \times 10^{-5} {\mbox{JC}^{-1} \mbox{K}^{-1}}, \qquad {\gamma}(x) \ge -\alpha_{\phie} + 8.61 \times 10^{-5}  {\mbox{JC}^{-1} \mbox{K}^{-1}}.
	\end{equation}
}
\end{rem} 
\begin{rem} \label{rem:dead core}
	It is well known that equations of the form 
	\begin{equation} \label{eq:inocent semilinear root}
	u_t - \Delta u + u^q = f \ge 0
	\end{equation}
	with $q < 1$ can have a solution $u \not \equiv 0$ such that $\{x \in (0,L) : u(x,t) = 0\}$ has positive measure for all {time $t$ after an initial time $t_0$}. These regions are known as ``dead cores'' and a detailed analysis of this phenomenon can be found, for instance, in \cite{Diaz:1985,diaz2001qualitative+nonlinear+parabolic}. 	{On the other hand, if we study the equation 
	\begin{equation} \label{eq:inocent semilinear power}
		u_t - \Delta u = u^q + f,
	\end{equation}
	we see that, if $q >1$ then the solution can blow up $u \to +\infty$ for a finite time, whereas there is no blow up if $q \le 1$.}
	As we will see in the proof of Theorem \ref{thrm:blow up behaviour}, {we} can bound $\ce$ and $\cs$ by solutions of problems {of type \eqref{eq:inocent semilinear root}}, where the  
	conditions {\eqref{eq:blow up potentials first condition}--\eqref{eq:blow up potentials last condition}} define the exponents $q$ {appearing in \eqref{eq:inocent semilinear root} {and \eqref{eq:inocent semilinear power}}}. {Taking into account these results it seems that}, if {these} conditions are not satisfied, it is likely that the solutions $\ce$ or $\cs$ reach $0$ in finite time, {which} is a non-physical behaviour.

\end{rem}

\subsection{{Assumptions and remarks regarding Theorems \ref{prop:blow up truncated potential} and \ref{prop:global existence truncated}}} \label{sec:assumptions theorem 3 and 4}

Tackling an example of global existence of solutions for the model {seems to be} a {Herculean} mission. However, if we assume some nicer behaviour of the nonlinear term $U$, at least far from the natural ``working'' conditions of the system, then we can establish a result {of} global existence of solutions.

\subsubsection{Truncated potential behaviour}

In this section we strive to make some small modifications to the problem so that we can show that the charge potentials do not blow up in finite time. First let us assume that for large difference of potentials the flux is constant. We slightly modify \eqref{eq:fLi plus}-\eqref{eq:fLi minus} to the following new hypothesis:
\begin{hyp} \label{hyp:flux truncated}
	Let $\overline{j}^\Li {(x, \ce, \cs, T, \eta(x, \ce, \cs, \phie, \phis, T))} = \overline{j}^\Li_+ - \overline{j}^\Li_-$ 
	{ where
		\begin{align} 
		\overline{j}^\Li_+ & = \ce^{\alphaceLi - \gamma_1 \left( \alpha_{\phie} + \frac{\mu ({x},T)}{ T} \right)} \cs^{\alphacsLi  + \gamma_1 \frac{\lambda_{\min} ({x},T)}{T}} (\csmax - \cs)^{\betacs - \gamma_1 \frac{\lambda_{\max}({x},T)}{T}} \nonumber  \\
		& \qquad \times  H \left (\frac {\gamma_1} T (\phis - \phieLi ) \right ) \exp \left( \frac{{-}\gamma_1}{T}p (\ce, \cs, T)\right), \label{eq:fLi plus H}\\
		\overline{j}^\Li_- & = \ce^{\alphaceLi + \gamma_2 \left( \alpha_{\phie} + \frac{\mu ({x},T)}{ T} \right)} \cs^{\alphacsLi  - \gamma_2 \frac{\lambda_{\min} ({x},T)}{T}} (\csmax - \cs)^{\betacs + \gamma_2 \frac{\lambda_{\max}({x},T)}{T}} \nonumber \\
		&\qquad \times H \left (\frac {-\gamma_2} T (\phis - \phieLi ) \right ) \exp \left( \frac{\gamma_2}{T}p (\ce, \cs, T)\right) , \label{eq:fLi minus H}
		\end{align}
	}
	{and} $H$ is a {bounded} smooth cut-off function of the exponential:
	\begin{equation}
	H(s) = \begin{dcases}
	\exp (s) , & s \le s_\infty, \\
	\zeta(s), & s > s_\infty,     \\
	\end{dcases}
	\end{equation}
	where $s_\infty$  is {an} arbitrarily large but fixed cut-off value, and {$\zeta$ is such that $\zeta' > 0$}. 
\end{hyp}
\begin{rem}\label{rem:cut-off by H} {Notice that \eqref{eq:fLi plus H}-\eqref{eq:fLi minus H} is \eqref{eq:fLi plus}-\eqref{eq:fLi minus} when $H (s) = \exp(s)$.}
\end{rem}

This modification allows {us to} prove Theorem \ref{prop:blow up truncated potential} {(see Section \ref{sec:proof of the blow up with truncated potential})}.

\subsubsection{Truncated temperature behaviour}

{Due to the the delicate interconnectedness  of the different terms in $F_T$ it is too difficult to prove a global existence theorem. Nonetheless, since many authors consider {that the temperature is constant in the cell (see {\cite{chaturvedi2010algorithms,Farrell2005,Ranon2014}})}, we will allow ourselves a substantial simplification on the structural assumption for {$F_T$}, in order to obtain the global uniqueness result {{avoiding the appearance of} possible blow-up phenomena, which {are} related with the potential of Lithium batteries for thermal runaway and explosion under high temperature operation (see \cite{smith+wang2006diffusion+limitation+lithium}).} 

\begin{hyp} \label{hyp:cutoff FT} Assume that $F_T$ is linear in $T$ 
\begin{align}
F_T &= B_T(\ce,\cs, \phie, \phis) + T A_T (\ce, \cs, \phie, \phis),
\end{align}
and consider the that $B_T$ is a nonnegative bounded function $B_T \in [0,\overline B_T]$ and that $A_T$ is bounded $A_T \in [\underline A_T, \overline A_T]$, where $\overline B_T, \underline A_T, \overline B_T \in \mathbb R$ are constant numbers.
\end{hyp}

{This assumptions allow us to prove the result of global existence of solutions given in Theorem \ref{prop:global existence truncated} (see Section \ref{sec:proof of the blow up with truncated potential and temp}).}

{
\begin{rem}
	The case $F_T \equiv 0$ is a particular case satisfying Assumptions \ref{hyp:cutoff FT}, including the case of $T$ just following Newton's law of cooling (if $\alpha_T \neq 0$) or $T \equiv T_0$, constant temperature (if $\alpha_T = 0$).
\end{rem}
}
\section{Green operator for the semilinear elliptic system \eqref{eq:model phie}-\eqref{eq:model phis}. Proof of Proposition \ref{prop:resolvent elliptic} and \ref{prop:resolvent elliptic Lipschitz}} \label{sec:existence of Green operator}

The idea of the proof {of Proposition \ref{prop:resolvent elliptic}} is to follow the idea for ``low overpotentials'' in \cite{Ramos:2015:lithiumionbatteries}, but replacing Lax-Milgram's theorem by the Brézis theory of ``pseudo monotone operators'' (see \cite{brezis1968equations+inequations+dualite}). Under some special assumptions, this result is sometimes referred to as  {Minty-Browder}'s theorem \cite{minty1963monotonicity,browder1965monotone+operators+convex+sets} (see, e.g., \cite{renardy+rogers2006introduction+pdes}). Uniqueness up to a constant was already proved in \cite{Ramos:2015:lithiumionbatteries}.\\

Let us define, for a {given} $(\ce, \csB, T) \in K_X$, the following function:
\begin{align*}
\eta_0 (x) &= - \alpha_{\phie} T f_{\phie} (c_e (x)) - U(x, \ce(x), \csB(x), T), \qquad \forall x \in \J,
\end{align*}
which corresponds to $\eta|_{\phis - \phieLi = 0}$. Since $(\ce, \csB, T)$ {is} known we can {define}, for $x \in [0,L]$ and $\Phi \in \mathbb R${,} 
\begin{equation}
	\underline j^\Li (x, \Phi ) =  \begin{dcases}
	\overline j^\Li (x, \ce(x), \csB(x), T, \Phi + \eta_0(x) ) & x \in \J, \\
	0 & x \in (L_1, L_1 + \delta){.}
	\end{dcases} 
\end{equation}
Notice that
\begin{equation} \label{eq:relation between underline j and j}
\underline j^\Li (x, \phis(x) - \phieLi(x)) = j^{\Li}(x,  \ce(x),  \csB(x),  \phie(x),  \phis(x) ,  T).
\end{equation}
We also define
\begin{align}
j^{\Li}_0 (x) = \underline j^\Li (x, 0) \label{eq:defn j Li zero}
\end{align}
which corresponds to $j^{\Li}|_{\phis - \phieLi = 0}$.

\begin{rem}
	Given $(\ce, \csB, T) \in K_X$, {due to \eqref{eq:jLi continuity} and \eqref{eq:U continuity} we have} $j^{\Li}_0 \in L^\infty (0,L) \cap \mathcal C(\overline \J)$.
\end{rem}

\begin{proof}[Proof of Proposition \ref{prop:resolvent elliptic}]
	We rewrite \eqref{eq:weak formulation phie}--\eqref{eq:weak formulation phis} in terms of $\phieLi$, defining $\tilde \kappa(x) = \kappa(\ce (x), T) \in \mathcal C([0,L])$, as
	\begin{align*}
		\int_0^L \tilde \kappa \frac{ \partial \phieLi}{\partial x }  \der \phietest x - \int_{\J} j^{\Li} \phietest \dx &= 0, \\
		\int_{\J} \sigma \frac{\partial \phis}{\partial x} \der  \phistest x \dx +  \int_{\J} j^{\Li} \phistest \dx&= - {\frac{I}{A}} (\phistest (L) - \phistest(0)), \qquad {\forall (\phietest, \phistest) \in \XPhi}. 
	\end{align*}
	Adding both equations we obtain that {\eqref{eq:weak formulation phie}--\eqref{eq:weak formulation phis} is equivalent to}
	\begin{equation} \label{eq: T}
			\int_0^L \tilde \kappa \frac{ \partial \phieLi}{\partial x }  \der \phietest x\dx + \int_{\J} \sigma \frac{\partial \phis}{\partial x} \der \phistest x \dx+ \int_{\J} j^{\Li} (\phistest - \phietest)\dx =  - { \frac I A} {(\phistest (L) - \phistest(0))}, \quad {\forall (\phietest, \phistest) \in \XPhi} .
	\end{equation}
	Let us define, for $x \in [0,L]$ and $\Phi \in \mathbb R$,
	\begin{equation} \label{eq:defn j Li hat} 
	 \widehat {j^\Li} (x, \Phi)=  {\underline j^{\Li} (x, \Phi)} -j^{\Li}_0 (x).
	\end{equation} 
	{Notice that} $ \widehat {j^\Li} (x, 0) = 0$. We can 	rewrite {\eqref{eq: T}} as
	\begin{align*}
		\int_0^L \tilde \kappa \frac{ \partial \phieLi}{\partial x }  \der \phietest x \dx&+ \int_{\J} \sigma \frac{\partial \phis}{\partial x} \der \phistest x\dx + \int_{\J} \widehat{j^{\Li}}{(x, \phis- \phieLi)}  (\phistest - \phietest)\dx \\
		& =   -  {\frac I A} {(\phistest (L) - \phistest(0))} - \int_{\J} j^{\Li}_0 {(x)}  (\phistest - \phietest) {\dx}.
	\end{align*}
	Let us define the operator {$A_1 : X_\Phi \to X_\phi^*$} by
	$$
		\langle A_1 (\phieLi,\phis), (\phietest,\phistest) \rangle  = \int_{\J}  \widehat {j^\Li} {(x, \phis - \phieLi) }  (\phistest - \phietest)\dx,
	$$
	{for all $(\phieLi, \phis), (\phietest, \phistest) \in \XPhi$.}
	Since $H^1(\J) \subset \mathcal C(\overline{\J})$ and {$(\phieLi, \phis) \in \XPhi \to { \widehat {j^\Li} }(x, \phis - \phieLi) \in \mathcal C(\overline{\J})$} is bounded and continuous {(due to \eqref{eq:jLi continuity} and \eqref{eq:U continuity})} we have that 
	$A_1 :\XPhi \to \XPhi^*$
	is bounded continuous.	
	{Furthermore}, {applying \eqref{eq:relation between underline j and j}, \eqref{eq:defn j Li hat} and Remark \ref{rem:monotonicity F Li} {(due to Assumption \ref{hyp:regularity flux general})}, we can show that} $A_1$ is a monotone operator since, {for all $(\phieLi, \phis), (\widetilde \phieLi, \widetilde \phis) \in \XPhi$ we have that}
	\begin{align*} \nonumber
		\langle A_1 &(\phieLi,\phis ) - A_1 (\widetilde{\phieLi},\tilde{\phis}), (\phieLi,\phis) - (\widetilde{\phieLi}, \widetilde{\phis}) \rangle  \\
		&= { \int_{\J}  \left( \widehat {j^\Li} {(x, \phis - \phieLi) } - \widehat  {j^\Li} {(x, \widetilde \phis - \widetilde \phieLi) } \right) {\left (\phis - \phieLi - ( \widetilde{\phis }- \widetilde {\phieLi} ) \right)}\dx }\\
		& = { \int_{\J}  \left(  \underline j^\Li {(x, \phis - \phieLi) }  -  \underline  j^\Li {(x, \widetilde \phis - \widetilde \phieLi) }\right) {\left (\phis - \phieLi - ( \widetilde{\phis }- \widetilde {\phieLi} ) \right)}\dx }\\ 
		&= { \int_{\J}  \left(	\overline  j^\Li {(x, \ce, \csB, { \phis -  \phieLi } +\eta_0) }  -  \overline  j^\Li {(x, \ce, \csB, T, \widetilde \phis - \widetilde \phieLi +\eta_0) } \right) }\\
		&\qquad \qquad	{\times {\left (\phis - \phieLi - ( \widetilde{\phis }- \widetilde {\phieLi} ) \right)}\dx }\\
		&{=} \int_{\J} F^\Li (x, \ce (x), \cs(x), T , \eta (x) ,  \widetilde \eta (x)  )  \left (\phis - \phieLi - ( \widetilde{\phis }- \widetilde {\phieLi}) \right)^2 \dx \nonumber
	\end{align*}
	{for all $(\phieLi, \phis), (\phietest, \phistest) \in \XPhi$, due to {\eqref{eq:coercivity} (which is true due {to} Assumption \ref{hyp:regularity flux general}),}} where 
	\begin{align*}
		\eta (x) &= \phis (x) - \phieLi (x) - \alpha_{\phie} T {f_{\phie}} (\ce) - U({x},\ce, \cs, T), \\
		\widetilde  \eta (x) &= \widetilde {\phis} (x) - \widetilde {\phieLi} (x) - \alpha_{\phie} T {f_{\phie}} (\ce) - U({x},\ce, \cs, T).
	\end{align*}
	Therefore, {for all $(\phietest, \phistest) \in \XPhi$,}
	\begin{equation}
		\langle A_1 (\phis, \phieLi) - A_1 (\widetilde{\phis}, \widetilde{\phieLi}), (\phis, \phieLi) - (\widetilde{\phis} , \widetilde{\phieLi}) \rangle
		\ge C\int_{\J}\left (\phis - \phieLi - ( \widetilde{\phis }- \widetilde {\phieLi}) \right)^2\dx,  \label{eq:coercivity of T1}
	\end{equation}
	where 
	{$$
		C = C\left(\ce, \cs, T, \phis - \phieLi, ( \widetilde{\phis }- \widetilde {\phieLi}) \right) = \min_{x\in\J} \int_{\J} F^\Li (x, \ce (x), \cs(x), T , \eta (x) ,  \widetilde \eta (x)  ) > 0 {.}
	$$}
	\\
	Let the operator {$\mathcal A:\XPhi \to \XPhi^*$} be defined by:
		\begin{equation} \label{eq:73}
			\langle {\mathcal A}(\phis, \phieLi) , (\phistest, \phietest) \rangle = 	\int_0^L \tilde \kappa \frac{ \partial \phieLi}{\partial x }  \der \phietest x \dx + \int_{\J} \sigma \frac{\partial \phis}{\partial x} \der \phistest x \dx + \langle A_1 (\phis, \phieLi) , (\phistest, \phietest) \rangle.
		\end{equation}
	Then ${\mathcal A}$ is a bounded, continuous, monotone operator. Moreover, it is coercive 
	due to the Poincaré-Wirtinger inequality $\| \phieLi \|_{L^2(0,L)} \le C \| \nabla \phieLi \|_{L^2 (0,L)}$ {and \eqref{eq:coercivity of T1}}. 
	Hence, there exists a unique solution of the system \eqref{eq:model phie}-\eqref{eq:model phis}{, due to the {Minty-Browder} theorem}.
\end{proof}

\begin{rem}
Of course if $(\phie, \phis)$  is a solution of the system \eqref{eq:model phie}-\eqref{eq:model phis} and $C$ is a constant then $(\phie + C, \phis + C)$ is also a solution. However, there exists only one solution in $\XPhi$.
\end{rem} 

\begin{rem}
	The main part of the proof above was to apply the monotonicity of $j^\Li$ with respect to $\phis-\phieLi$. The idea behind this monotonicity method has to do with the convexity of the associated energy functional.
\end{rem}

We have so far proved that {the map $G_\phi$ given by \eqref{eq:defn Gphi} is well-defined}. We prove now, applying the Implicit Function Theorem, that this map is $\mathcal C^1$.

\begin{proof}[Proof of Proposition \ref{prop:resolvent elliptic Lipschitz}]
	We will apply the implicit function theorem for {the Banach space-valued {mapping}} $F : \widehat X \times \widehat Y \to \widehat Z$, to solve for an operator {$\widehat G_\phi: { U  \subset }\widehat X \to \widehat Y$ in an expression of the form
	\begin{equation} \label{eq:implicit formulation G}
		F(\widehat x, \widehat G_\phi (\widehat x) ) = 0, \qquad {\textrm{for all } \widehat x \in U.} 
	\end{equation}}
	The choice of functional spaces will be
	\begin{equation} 
	\widehat X = \mathcal C_{> k_0}([0,L]) \times \mathbb R \times K_X, \qquad \widehat Y = \XPhi, \qquad \widehat Z = \XPhi^*,
	\end{equation} 	
	{with}{ 
	\begin{equation} 
		\mathcal C_{> \kappa_0}([0,L]) = \{  \tilde \kappa \in \mathcal C([0,L]) : \tilde \kappa > \kappa_0  \}, \quad \textrm{for some }\kappa_0 > 0.
	\end{equation}
	} 
	{We will then check that
		\begin{equation} \label{eq:Gphi and hat Gphi}
		G_\phi (\ce, \csB, T, I ) = \widehat G_\phi (\kappa(\ce,T), I, \ce, \csB, T) + (\alpha_{\phie} T f_{\phie} (\ce),0),
		\end{equation}
	due to the definition of $\phieLi$ (see \eqref{eq:definition phieLi}).}
	Notice that $\widehat X$ is an open set of a Banach space, and therefore we can consider the Implicit Function Theorem (see, e.g., \cite{lang2012fundamentals}) in this setting. We will use the notation
	\begin{equation*}
	\widehat x = ({\tilde \kappa}, I, \ce, \csB, T) { \in \widehat X}, \qquad \widehat y = (\phieLi,\phis) = (u,v) { \in \widehat Y.}
	\end{equation*}
	We consider the maps
	$
		A_1, A_2, A_3, A_4: \widehat X \times \widehat Y    \to \widehat Z, 
	$
	given by
	\begin{align*} 
		{ A_1 (\widehat x, \widehat y) (\phietest,\phistest)} &= A_1 (\widehat x, u) (\phietest) = \int_0^L {\tilde \kappa} (x) u ' (x) \phietest ' (x) \dx  \\
		{ A_2 (\widehat x, \widehat y) (\phietest,\phistest)} &= A_2 (v) (\phistest)= \int_{\J } \sigma (x)  v ' (x) \phistest ' (x) \dx  \\
		 \overline \eta_0(x,\widehat x (x)) &= - \alpha_{\phie} T f_{\phie} (c_e (x)) - U(x, \ce(x), \csB(x), T)\\
		A_3 (\widehat x, {\widehat y}) (\phietest, \phistest)&= \int_{\J } \overline{j}^\Li (x, \widehat x (x),  v(x) - u(x) + \overline \eta_0(x,\widehat x (x))) \cdot (\phistest (x) - \phietest (x)) \dx \\
		{ A_4 (\widehat x, \widehat y) (\phietest,\phistest)} &= A_4 (I) (\phistest) =  {\frac I A} (\phistest (L) - \phistest(0)) \\
		F &= A_1 + \cdots + A_4. 
	\end{align*}
	Our definition of weak solution is precisely
	\begin{equation}
		F (\widehat x, \widehat y) = 0_{\widehat Z}.
	\end{equation}
	The function $A_4$ is linear and continuous, therefore $C^\infty$. It is automatic to see that
		$$D_{\widehat y} A_4 = 0. $$
		On the other hand, $A_1, A_2$ are multilinear and continuous, and therefore of class $\mathcal C^1$. In particular, for $\widehat y = (u,v), \bar{\widehat y} = (\bar u , \bar v) \in X_\Phi$ we have
	\begin{align*}
		D_{u} A_1 (\tilde \kappa, u) (\bar u) &= A_1 (\tilde \kappa, \bar {u}) \\
		D_{v }A_2 (u)(\bar v) &= A_2 (\bar v) 
	\end{align*}
	Since $\partiald {\overline j^\Li}{\eta}$ is of class $C^1$ {(see {\eqref{eq:jLi continuity} in} Assumption \ref{hyp:regularity flux general})}, then $A_3$ is also of class $C^1$ and
	\begin{equation*}
		D_{(u, v)} A_3 (\widehat x, u,v) (\bar {u},\bar {v}) (\phietest,\phistest) = \int_{\J} g (\bar v  - \bar u) (\phistest - \phietest)\dx .
	\end{equation*}
	where, for $x \in \J$,
	\begin{equation*}
		 g(x) = \partiald {\overline{j}^\Li}{\eta} (x, \widehat x (x),  v(x) - u(x) + \overline \eta_0(x,\widehat x (x))).
	\end{equation*}
	Let $\widehat x ^0 = (\tilde \kappa, \ce, \csB, T) \in \widehat X$, and let  $\widehat y^0 = (u^0,v^0) = (\phieLi,\phis) \in \XPhi $ be the solution found in Proposition \ref{prop:resolvent elliptic}. Then
	$
	 g(x)  > 0
	$
	is in $\mathcal C(\overline \J)$. Therefore, there exists a constant $g_0$ such that $g(x) \ge g_0 > 0$ in $\J$.  Then 
	\begin{equation*}
		 D_{(u,v)} {F} (\widehat x^0, \widehat y^0): (\bar {u},\bar {v}) \in \XPhi \to \XPhi^*
	\end{equation*}
	understood {as} a bilinear form
	\begin{align*}
		G(  (\bar {u},\bar {v}),(\phietest, \phistest)  ) = \int_0 ^L \tilde \kappa \bar  {u} ' \phietest' + \int _{\J} \sigma \bar {v}' \phistest' + \int_{ \J } g (x) (\bar {u} - \bar {v}) (\phistest - \phietest) {,}
	\end{align*}
	is continuous and coercive in $\XPhi \times \XPhi$. Therefore, by Lax-Milgram's theorem $D_{(u,v)} F(\widehat x^0, \widehat y^0)$ is bijective. Then, by the {Implicit Function Theorem} applied to $F$, there exists a unique $C^1$ function {$\widehat G_\phi:  U  \to \XPhi $}, defined in a neighbourhood {$U$} of $\widehat x^0$ in $\widehat X$, such that {\eqref{eq:implicit formulation G}} {holds}. 
	\\	
	Since $(\ce, T) \in H^1(0,L) \times \mathbb R \mapsto \kappa (\ce, T) \in \mathcal C([0,L])$ is also $\mathcal C^1$ {(the function $\kappa$ is of class $\mathcal C^2$ due to Assumptions \ref{hyp:regularity of coefficients}) we have that the map
	$$(\ce, \csB, T,I) \in K_X \times \mathbb R \, \overset{J} \longmapsto \, (\kappa (\ce, T), I, \ce, \csB, T) \in \widehat X$$
	is $C^1$. Let us choose a point $(\ce^0, \csB^0,T^0, I^0) \in K_X \times \mathbb R$, and let $ \widehat x_0 = (\kappa (\ce^0, T^0), I^0, \ce^0, \csB^0, T^0) \in \widehat X$. Let $U \subset \widehat X$ be a {suitable} neighbourhood of $\widehat x_0$ so that $\widehat G_\phi : U \to \XPhi$ is defined {satisfying \eqref{eq:implicit formulation G}}. Tak{ing} the neighbourhood of $(\ce^0, \csB^0,T^0, I^0)$ given by $V = J^{-1} (U) \subset K_X \times \mathbb R${, the} composition
	$$		(\ce, \csB, T,I) \in V \, \overset J \longmapsto \, (\kappa (\ce, T), I, \ce, \csB, T) \in U \, {\overset{ \widehat G_\phi} \longmapsto} \, (\phieLi, \phis) \in \XPhi$$
	is $\mathcal C^1$. We finally consider the following translation (which is also of class $\mathcal C^1$)
	\begin{eqnarray*}
		\tau : V \times \XPhi &\longrightarrow& H^1(0,L) \times H^1(\J) \\
		(\ce, \csB, T,I,\phieLi, \phis) & \longmapsto & (\phie, \phis)= (\phieLi + \alpha_{\phie} T f_{\phie} (\ce),\phis).
	\end{eqnarray*}
	Due to {the} uniqueness {(up to a constant)} {result} we proved in Proposition \ref{prop:resolvent elliptic}, 
	\begin{eqnarray*}
			\tau \circ (Id, \widehat G_\phi \circ J): V \subset K_X \times \mathbb R &\longrightarrow& H^1(0,L) \times H^1(\J)
	\end{eqnarray*}
	is the map $G_\phi|_{V}$ (as constructed in \eqref{eq:defn Gphi} through Proposition \ref{prop:resolvent elliptic}). Thus, $G_\phi$ is of class $C^1$ in a neighbourhood of $(\ce^0, \csB^0,T^0, I^0)$. Since this argument holds over any point $(\ce^0, \csB^0,T^0, I^0) \in K_X \times \mathbb R$, we have shown that $G_\phi$ is of class $\mathcal C^1$ over $K_X \times \mathbb R$.  This implies that $\widetilde G_\phi$ is also $\mathcal C^1$ {and} {it} concludes the proof.
	}
\end{proof}

Therefore, {as an immediate consequence of Proposition~\ref{prop:resolvent elliptic Lipschitz}}{,} we have the following lemma {in terms of the space of piecewise continuous functions $\mathcal C_{\rm part}$ defined by \eqref{eq:defn piecewise continuous}:}
	\begin{lem} \label{lem:G phi locally Lipschitz continuous}
		Let $I \in \mathcal C([0,t_0])$ then ${ \widetilde G_{\phi,t} }: \mathcal C([0,t_0], K_X) \to \mathcal C([0,t_0], K_Z)$ is locally Lipschitz continuous. If $I \in \mathcal C_{\rm {part}}([0,t_0])$ then ${ \widetilde G_{\phi,t} } : \mathcal C([0,t_0], K_X) \to \mathcal C_{\rm {part}}([0,t_0], K_Z)$ is locally Lipschitz continuous.
	\end{lem}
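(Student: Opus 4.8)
The plan is to deduce the lemma from the pointwise $\mathcal C^1$-regularity of $\widetilde G_\phi : K_X\times\mathbb R\to K_Z$ obtained in Proposition~\ref{prop:resolvent elliptic Lipschitz}, by an elementary compactness argument along trajectories. First I would recall the standard fact that a $\mathcal C^1$ map between Banach spaces defined on an open set is locally Lipschitz continuous; more precisely, since $K_X\times\mathbb R$ is open and convex, for every compact subset $\mathcal K$ of it one can find a compact convex neighbourhood $\mathcal K'\subset K_X\times\mathbb R$ of $\mathcal K$ on which the Fréchet derivative $D\widetilde G_\phi$ is bounded (being continuous on a compact set), and then the mean value inequality yields a Lipschitz constant $L_{\mathcal K}$ for $\widetilde G_\phi$ valid on $\mathcal K$.

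Next, fix $I\in\mathcal C([0,t_0])$ and a curve $(\ce,\csB,T)\in\mathcal C([0,t_0],K_X)$. The set $\{(\ce(t),\csB(t),T(t),I(t)):t\in[0,t_0]\}$ is a continuous image of the compact interval $[0,t_0]$, hence a compact subset of $K_X\times\mathbb R$; by the previous step there are a constant $L$ and a radius $\rho>0$ such that every curve $(\tilde\ce,\tilde\csB,\tilde T)\in\mathcal C([0,t_0],K_X)$ with $\|(\tilde\ce,\tilde\csB,\tilde T)-(\ce,\csB,T)\|_{\mathcal C([0,t_0],X)}<\rho$ still has $\{(\tilde\ce(t),\tilde\csB(t),\tilde T(t),I(t)):t\in[0,t_0]\}$ inside the compact set on which the estimate with constant $L$ applies. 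For two such curves, the pointwise bound
\begin{equation*}
\big\|\widetilde G_\phi(\ce(t),\csB(t),T(t),I(t))-\widetilde G_\phi(\tilde\ce(t),\tilde\csB(t),\tilde T(t),I(t))\big\|_Z\le L\,\big\|(\ce(t),\csB(t),T(t))-(\tilde\ce(t),\tilde\csB(t),\tilde T(t))\big\|_X
\end{equation*}
holds for every $t\in[0,t_0]$; taking the supremum over $t$ gives exactly the local Lipschitz estimate for $\widetilde G_{\phi,t_0}$ on the ball of radius $\rho$ about $(\ce,\csB,T)$. That $\widetilde G_{\phi,t_0}$ maps into $\mathcal C([0,t_0],K_Z)$ is immediate: $\widetilde G_\phi$ takes values in $K_Z$ and $t\mapsto(\ce(t),\csB(t),T(t),I(t))$ is continuous, so its composition with the continuous map $\widetilde G_\phi$ is continuous on $[0,t_0]$.

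For the piecewise-continuous case $I\in\mathcal C_{\rm part}([0,t_0])$, with partition $0=s_0<\cdots<s_N=t_0$, the curve $t\mapsto(\ce(t),\csB(t),T(t),I(t))$ is continuous on each closed subinterval $[s_{i-1},s_i]$ but may jump at the nodes; hence $W=\widetilde G_{\phi,t_0}(\ce,\csB,T)$ belongs to $\mathcal C_{\rm part}([0,t_0],K_Z)$ rather than to $\mathcal C([0,t_0],K_Z)$. The Lipschitz estimate is unchanged because it is a pointwise-in-$t$ inequality and the norm of $\mathcal C_{\rm part}([0,t_0],\cdot)$ is still the supremum norm; one only needs the (finitely many) lateral limits $(\ce(s_i^\pm),\csB(s_i^\pm),T(s_i^\pm),I(s_i^\pm))$ to be included in the compact set used to extract the uniform constant. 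I do not expect a genuine obstacle here; the single delicate point is the uniformity of the Lipschitz constant along the whole trajectory, which is precisely what compactness of the image curve, together with the openness of $K_X$ (room to thicken), provides.
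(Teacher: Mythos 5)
Your argument is correct and follows the route the paper intends, which it records without proof as an ``immediate consequence'' of the $\mathcal C^1$-regularity of $\widetilde G_\phi$ in Proposition~\ref{prop:resolvent elliptic Lipschitz}: compactness of the image trajectory together with openness of $K_X\times\mathbb R$ gives a uniform bound on $D\widetilde G_\phi$ near the trajectory, and the supremum norm converts the resulting pointwise Lipschitz estimate into the one on $\mathcal C([0,t_0],\cdot)$. One small slip to correct: in the infinite-dimensional space $X\times\mathbb R$ a compact set has no compact neighbourhood, so rather than a ``compact convex neighbourhood'' of $\mathcal K$ you should take an open $\rho$-thickening of $\mathcal K$ contained in $K_X\times\mathbb R$ on which $D\widetilde G_\phi$ is bounded by a finite-cover argument, and then apply the mean-value inequality along the segments joining $(\ce(t),\csB(t),T(t),I(t))$ to $(\tilde\ce(t),\tilde\csB(t),\tilde T(t),I(t))$, which remain inside that thickening whenever the two curves are within $\rho/3$ of each other in the $\mathcal C([0,t_0],X)$ norm.
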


	
	\section{Regularity of the Green operators $G_{\ce,t}$ and $G_{\csB,t}$}
	
	The regularity of $G_{\ce,t}$ is a well-known property:
	\begin{equation*}
		G_{\ce, t_0} : L^2((0,t_0) \times (0,L)) \to \mathcal C ([0,t_0];H^1(0,L)).
	\end{equation*}
	This operator has a nice representation formula
	\begin{equation*}
		(G_{\ce, t_0} f ) (t) = S (t) c_{e,0} + \int_0^t S(t-s) f(s)\ds, 
	\end{equation*}
	where $S(t) u_0$ is the solution of 
	\begin{equation*}
		\begin{dcases}
			\frac{\partial u }{\partial t} - \partiald {} {x} \left( D_{\rm e} \partiald {\ce}{x}  \right) = 0, & (0,L) \times \mathbb R, \\
			\frac{\partial u}{\partial x} = 0, & \{0,L\} \times \mathbb R , \\
			u(0) = u_0, & t = 0. 
		\end{dcases}
	\end{equation*}
	A number of properties can be easily derived from this expression. {For instance,} the continuous dependence with respect to the data:
	\begin{equation*}
		\| G_{\ce, t_0} f - G_{\ce,t_0} g \|_{\mathcal C([0,t_0]; H^1(0,L))} \le \rho (t_0) \| f - g \|_{L^2(0,t_0; H^1(0,L))},
	\end{equation*}
	where $\rho$ is and increasing, continuous function such that $\rho (0) = 0$. 
	\\
	The term $G_{\csB, t}$ is a little trickier. First we recall (see \cite{Arendt+al:2013vectorvaluecauchy,Nittka:2014:inhomogeneous-neumann}) that, for any $q > 2$:
	\begin{equation*}
		G_{\cs,R,t_0}  : \mathcal C ((0,R)) \times L^q(0,t_0) \to \mathcal C ([0,t_0]\times [0,R] ),
	\end{equation*}
	and
	\begin{equation*}
		\| G_{\cs,R,t_0} (u_0,g) \|_{L^\infty (0,t_0;L^\infty (B_R))} \le C ( \|u_0\|_{L^\infty (B_R)} + \| g \| _{L^q(0,t_0)}) .
	\end{equation*} 
	Therefore, due to the linearity of the equation, for $x,y$ in the same connected component of $\J$, we have that:
	\begin{equation*}
		\| G_{\cs,R,t_0 }(u_0,g) -  G_{\cs,R,t_0} (v_0,h) \|_{L^\infty (0,T;L^\infty (B_R))} \le C ( \|u_0 - v_0 \|_{L^\infty (B_R)} + \| g - h \| _{L^q(0,T)}) .
	\end{equation*} 	
	This solves the problem of continuity of $G_{\cs} g$ with respect to $x$ via the continuous dependence of the operator. Since $c_{s,0}$ is continuous, working in each component we can prove directly that
	\begin{equation*}
		G_{\csB,t_0}  : \mathcal C(\overline \J; L^q (0,t_0)) \to \mathcal C ( \overline \J \times [0,t_0]),
	\end{equation*} 
	is Lipschitz continuous. Furthermore it {is easy} to check that
		\begin{align*}
		G_{\csB,t_0} & : \mathcal C(\overline  \J \times [0,t_0]) \to \mathcal C (\overline \J \times [0,t_0]).
		\end{align*}
	We also have the following {time} estimate, for $t_0 \ge 0$,
		\begin{align*}
		\| G_{\csB,t_0} g  &-  G_{\csB,t_0} h \|_{L^\infty ([0,t_0] \times \J)} \le C  t_0^{\frac 1 q} \| g - h \|_{L^\infty ({(}0,t_0{)} \times \J)}.
		\end{align*} 	
	By defining the vectorial Green operator for the evolutionary part
	\begin{equation*}
		 \mathbf G _t = (G_{\ce,t}  G_{\cs,t} , G_{T,t} ) : \mathcal C ([0,t]; Y) \to \mathcal C ([0,t]; X),
	\end{equation*}
	due to the previous results{,} we have 
	\begin{equation} \label{eq:Gt Lipschitz constant}
		\| \mathbf G_t \mathbf y - \mathbf G_t \mathbf {\hat y} \|_{\mathcal C ([0,t]; X)} \le \rho (t) \| \mathbf y - \mathbf{\hat y} \| _{\mathcal C ([0,t]; Y)},
	\end{equation}
	where $\rho$ is a continuous function such that $\rho(0) = 0$.

	\section{Proof of Theorem \ref{thrm:well posedness model} \label{sec:well-posedness}}
	\label{sec:proof of theorem 1}
	
	\begin{proof}[Proof of Theorem~\ref{thrm:well posedness model}]
	First, let us assume that $I$ is continuous. { Let us define the function}
	\begin{align*}
		\mathbf f(\ce,\csB,{\phie}, \phis, T) = \bigg( &  \alpha_e (x) N_{j^\Li} (\ce, \csB ,  {\phie}, \phis,T ),  \\
		& \alpha_{\phis} N_{j^\Li} (\ce, \csB ,  {\phie}, \phis,T ), \\
		& - h A_s (T - \Tamb) + F_T \bigg) .
	\end{align*}
	{It is clear that $\mathbf f : K_X \to Y$ {is} locally Lipschitz continuous {(due to the definition and regularity of $N_{j^\Li}$ and $F_T$)}. {Let us define}, for any $t>0$, 
		$$\mathbf f_t : \mathcal C( [0,t] ; K_X ) \to \mathcal C([0,t] ; Y),$$
	by $\mathbf f_t (\mathbf x ) (s)  = \mathbf f (\mathbf x(s))$ for $s \in [0,t]$. This operator is also locally Lipschitz continuous.}
	Let $\mathbf x = (\ce, \csB, T)$. Then we can rewrite {the fixed point problem  \eqref{eq:solution as fixed point in csB} (which was our definition of weak-mild solution of \eqref{eq:model ce}-\eqref{eq:model T})} as
	\begin{equation} \label{eq:solution as fixed point in vector form}
		\mathbf x  = \mathbf G_t \circ { \mathbf f_t \circ  \widetilde G_{\phi,t} ( \mathbf x ) } .
	\end{equation}
	\\
	Since ${\mathbf f_t \circ \widetilde G_{\phi,t}} : \mathcal C([0,t_0], K_X) \to \mathcal C([0,t_0], Y)$ is locally Lipschitz continuous {(due to Lemma~\ref{lem:G phi locally Lipschitz continuous})}, we can set a bounded neighbourhood $\mathbf U \subset K_X$ around $\mathbf x(0) = (\ce(0), \csB(0), T (0)) \in K_X$, and a bounded set $\mathbf V \subset Y$, such that the composition
	$${\mathbf f_t \circ \widetilde G_{\phi,t_0}}  : \mathcal C([0,t_0], \mathbf U) \to \mathcal C([0,t_0], \mathbf V)$$
	 is globally Lipschitz continuous. Due to the continuity of $\mathbf G_t$ and the fact that $\mathbf G_0 \equiv \mathbf x(0) \in K_X$, there exists $t_1 > 0$ such that
	$$		\mathbf G_{t_1}: \mathcal C([0,t_1], \mathbf V) \to \mathcal C([0,t_1], \mathbf U).$$
	Therefore, due to \eqref{eq:Gt Lipschitz constant}, we obtain that, for $t_2 = \min\{t_0, t_1\} > 0$ we have that
	$$\mathbf G_{t_2} \circ {\mathbf f_t \circ \widetilde G_{\phi,t_2}} :  \mathcal C([0,t_2], \mathbf U) \to \mathcal C([0,t_2], \mathbf U)$$
	is a contracting map. Then, we can apply the Banach fixed point theorem to find a unique solution of problem \eqref{eq:solution as fixed point in vector form}. If $I$ is $\mathcal C_{\rm {part}}$ then one can simply paste the mild solutions from the different time partitions. In this sense there exists a unique ``{piecewise} weak-mild solution''.
	\\
	Applying classical results, there exists a maximal existence time $\tend \le \tendI$.  If $\tend < \tendI$ then $d ( \mathbf x (t), \partial K_X ) \to 0 $ or $\| \mathbf x (t) \|_X \to + \infty$ as $t \to \tend$. This is equivalent to \eqref{eq:solutions to boundary} and the proof is complete.
	\end{proof}
	
	\section{Proof of Theorem \ref{thrm:blow up behaviour}}
	
	There are a number of papers studying the semilinear equation $u_t - \Delta u = f(u)$ with Robin type boundary conditions, and its eventual possible blow up, depending on the growth of $f$. Some of the most classical results are due to Amann \cite{Amann:1987abstract} {(for some more recent works, {see} e.g.,  \cite{pao2012nonlinear+parabolic})}.

		Let us assume that the solution $(\ce, \csB, \phie, \phis, T)$ is defined in $[0,t_0)$, where $0 < t_0 < \tendI$, and assume that \eqref{eq:reasonable condition blowup} does not hold as $t \nearrow t_0$. We will show that \eqref{eq:solutions to boundary} does not hold, and therefore $\tend > t_0$. We can think of the problem written in the following way
			\begin{equation*}
			\frac{\partial \ce}{\partial t} -{  \frac{\partial }{\partial x } \left(D_{\rm e} \frac{\partial \ce}{\partial x}\right)}+ C_1 \ce^{\alphaceLi + \gamma_2 \left( \alpha_{\phie} + \frac{\mu (\hat T)}{\hat T} \right)} = C_2 \ce^{\alphaceLi - \gamma_1 \left( \alpha_{\phie} + \frac{\mu (\hat T)}{\hat T} \right)} \ge 0,
			\end{equation*}		
			where $C_1$ and $C_2$ are functions which we will show can be estimated. We can define
			\begin{align*}
			\overline \mu & = \max_{{[0,L]\times}[0,t_0]} \left( \alpha_{\phie} + \frac{\mu ({x},T(t))}{ T(t)} \right),  \\
			\underline \mu & = \min _{{[0,L]\times}[0,t_0]} \left( \alpha_{\phie} + \frac{\mu ({x},T(t))}{T(t)} \right) \ge 1,\\
			\overline C_1 &= { \max_{x \in \J}{(\delta_1 + \delta_2 )} } {\left( \max_{[0,L]}\alphaceeq \right) }   \exp \left( { \frac{\gamma_1 + \gamma_2}{\min _{[0,t_0]}T(t)}  (\| \phistest - \phie \|_{L^\infty } +  \|p \|_{L^\infty} ) } \right) .
			\end{align*}
			Notice that, since $\csmax < 1$, we can conclude that $0 \le  C_1, C_2 \le \overline C_1$. Finally, we can define some increasing continuous functions $\beta_1, \beta_2$ such that $\beta_1 (0) = \beta_2 (0) = 0$, {with} $\beta_2$ Lipschitz continuous and 
			\begin{align*} 
			\max  \{ s ^{\alphaceLi - \gamma_1 \underline \mu }, s^{\alphaceLi - \gamma_1 \overline \mu }\} \le \beta_1 (s) \le 1 + s , \\
			\beta_2 (s) \ge \max \{ 
			s^{\alphaceLi + \gamma_2 \overline \mu} , s^{\alphaceLi + \gamma_2 \underline \mu}  
			\},
			\end{align*}
			so that we can construct the supersolution $\overline \ce$ and subsolution $\underline {\ce}$ defined as solutions{, respectively, of}
			\begin{align*}
			&\begin{dcases}
			\frac{\partial \overline  \ce}{\partial t} - {  \frac{\partial }{\partial x } \left(D_{\rm e} \frac{\partial\overline \ce}{\partial x} \right) } = \overline C \beta_1 (\overline {\ce }) & (x,t) \in (0,L) \times (0,\tend), \\
			\overline  \ce (0) = c_{e,0} & t= 0, \\
			\partial_n \overline  \ce = 0 & x \in \{0,L\},
			\end{dcases}\\
			&\begin{dcases}
			\frac{\partial \underline \ce}{\partial t} -{  \frac{\partial }{\partial x } \left(D_{\rm e} \frac{\partial\underline \ce}{\partial x} \right) }  + \overline C \beta_2 (\underline {\ce }) = 0 & (x,t) \in (0,L) \times (0,\tend),\\
			\underline \ce (0) = c_{e,0}& t= 0, \\
			\partial_n \underline \ce = 0 & x \in \{0,L\}.
			\end{dcases}
			\end{align*}
			Using the conditions on the exponents {given by Assumptions~\ref{hyp:bounds exponents}}{,} we deduce that $\overline \ce, \underline \ce$ are continuous, globally defined in time and
			\begin{equation*}
			 \min_{[0,L] \times [0,t_0]} \underline \ce > 0 \qquad \max_{[0,L] \times [0,t_0] } \overline \ce < +\infty {.}
			\end{equation*}
			We also have that
			\begin{equation*}
			\begin{dcases}
			\frac{ \partial \cs}{\partial t} - {\frac{D_{\rm s}}{r^2}{ \frac{\partial }{\partial r} \left(r^2 \frac{ \partial \cs}{\partial r} \right)}} = 0, & {(x,r,t) \in \Done \times (0,t_0)},\\
			{\frac {\partial \cs} {\partial r} }  + C_3 \cs^{\alphacsLi + \gamma_1 \frac{\lambda_{\min} ({x}, T)}{T}} = \alpha_{\cs } j_{-}^\Li \ge 0, & {r} = \Rs(x), \\
			\cs = c_{\textrm{s},0} & t = 0, \\
			\end{dcases}
			\end{equation*}
			where $C_3$ is a suitable function, such that if we define
			\begin{align*}
			\underline  \lambda_{\min} &= \min_{{[0,L]\times}[0,t_0]}  \frac{ \lambda _{\min} ({x}, T(t)) }  {T(t)} , \\
			\overline   \lambda_{\min} &= \max_{{[0,L]\times}[0,t_0]}  \frac{ \lambda _{\min} ({x},T(t)) }  {T(t)}  \\
			\overline C_3 &= { \max_{x \in \J}{(\delta_1 + \delta_2 )} }  {\left( \max_{[0,L]}\alphacseq \right) }  \left( \max_{\alpha \in \{ \alphaceLi \pm \gamma_{1,2} \underline \mu, \alphaceLi \pm \gamma_{1,2} \overline  \mu\}} \| \ce \|_{L^\infty} ^ {\alpha} \right)  \\
				& \qquad \times  \exp \left( \frac 1 {{\min_{[0,t_0]}}T(t)} { \max\{ \gamma_1 , \gamma_2 \} (\| \phistest - \phie \|_{L^\infty } +  \|p \|_{L^\infty} ) } \right) {,}
			\end{align*}
			then $0 \le C_3 \le \overline C_3$.	Now, let {$\beta_3$ be} a monotone increasing locally Lipschitz continuous function
			such that $ \beta_3(0) = 0$ and 
			\begin{equation*}
			\beta_3 (s) \ge \max \{  s^{\alpha_{c_s} + \gamma_1 \underline \lambda_{\min} } , s^{\alpha_{c_s} + \gamma_1 \overline \lambda_{\min } } \}.
			\end{equation*}
			We construct the subsolution $\underline \cs$, for every $t_0 < + \infty ${,} as the solution of  
			\begin{equation*}
				\begin{dcases}
				\partiald {\underline \cs} t - {\frac{D_{\rm s}}{r^2}{ \frac{\partial }{\partial r} \left(r^2 \frac{ \partial \underline \cs}{\partial r} \right)}} = 0, & (x,y,t) \in {(x,r,t) \in \Done \times (0,t_0)},\\
				\partial_n \underline\cs + \overline C_3 \sigma (\underline \cs) = 0, & r = \Rs (x) {,}\\
				\underline \cs (x,y,t) = \min_{\sigma \in \J} c_{\textrm{s},0}(\sigma,y) & t= 0{,}
				\end{dcases}
			\end{equation*}
			so that, by the comparison principle, we have that $0 < \underline \cs \le \cs$, {(by applying Assumptions~\ref{hyp:bounds exponents})}. Finally{,} if we write
			\begin{equation} \label{eq:problem for csmax-cs}
				\begin{dcases}
				\frac{\partial }{\partial t}(\csmax - \cs) - {\frac{D_{\rm s}}{r^2}{ \frac{\partial }{\partial r} \left(r^2 \frac{ \partial (\csmax - \cs)}{\partial r} \right)}} = 0 & (x,y,t) \in {(x,r,t) \in \Done \times (0,t_0)},\\
				\frac{\partial }{\partial r}  (\csmax - \cs) + C_4  (\csmax - \cs)^{\betacs + \gamma_2 \frac{\lambda_{\max} (T)}{T}} = j_{+}^\Li \ge 0 & r = \Rs(x), \\
				\csmax - \cs = \csmax - c_{\textrm{s},0} & t = 0,
				\end{dcases}
			\end{equation}
			since $0\le C_4 \le \overline C_3$, if we introduce
			\begin{align*}
			\underline  \lambda_{\max} &= \min_{{[0,L]\times}[0,t_0]}  \frac{ \lambda _{\max } ({x},T(t)) }  {T(t)} , \\
			\overline   \lambda_{\max} &= \max_{{[0,L]\times}[0,t_0]}  \frac{ \lambda _{\max } ({x},T(t)) }  {T(t)} 
			\end{align*}
			and define $\beta_4$ as a monotone increasing locally Lispchitz continuous function such that $ \beta_4(0) = 0$ and 
			\begin{equation*}
			\beta_4 (s) \ge \max \left \{  s^{\alpha_{c_s} + \gamma_2 \underline \lambda_{\max } } , s^{\alpha_{c_s} + \gamma_2 \overline \lambda_{\max  } } \right \}
			\end{equation*}
			then, by defining $\tilde \cs$ as the solution of 
			\begin{equation*}
				\begin{dcases}
				\partiald {\tilde \cs} t  - {\frac{D_{\rm s}}{r^2}{ \frac{\partial }{\partial r} \left(r^2 \frac{ \partial \tilde \cs}{\partial r} \right)}} = 0 & (x,y,t) \in \Dthree \times (0,t_0), \\
				\partiald{}{r}  \tilde \cs + \overline C_3 \beta_4( \tilde \cs) = 0 &  |y|=\Rs(x) \\
				\tilde \cs (x,y,t) = \min_{\sigma \in \J} (\csmax - c_{\textrm{s},0}(\sigma,y))& t = 0,
				\end{dcases}
			\end{equation*}
			we arrive to the {definition of the searched} function $\overline \cs = \csmax - \tilde \cs$. We have that $\tilde \cs > 0$ {(due to Assumption~\ref{hyp:bounds exponents})} and {$\tilde \cs$} is a subsolution of \eqref{eq:problem for csmax-cs}. Hence $\tilde \cs \le \csmax - \cs$. Therefore we have that $\cs \le \overline \cs < \csmax$. Putting these two bounding solutions together we conclude
			\begin{equation*}
			0 < \underline \cs \le \cs \le \overline \cs < \csmax.
			\end{equation*}
			Hence \eqref{eq:solutions to boundary} does not hold as $t \nearrow t_0$. Applying Theorem \ref{thrm:well posedness model} {we obtain that} $\tend > t_0$. Therefore, either $\tend = \tendI$ or \eqref{eq:reasonable condition blowup}.
	
\section{Proof of a {global in time existence} result}

\subsection{On the truncated potential case. Proof of Theorem \ref{prop:blow up truncated potential}}

\label{sec:proof of the blow up with truncated potential}

\begin{proof}[Proof of Theorem \ref{prop:blow up truncated potential}]
Assume that \eqref{eq:blow up of T} does not hold as $t \to t_0$ with $0 < t_0 < \tendI$. We can substitute the constants in the proof of Proposition \ref{prop:blow up truncated potential}
\begin{align}
	\overline C_1 &={ \max_{x \in \J}{(\delta_1 + \delta_2 )} }  \left(  \max_{[0,L]
	} \alphaceeq \right)   \exp \left( { \frac {\gamma_1 + \gamma_2} {\min_{[0,t_0]}T}   \|p \|_{L^\infty} } \right), \\
	\overline C_3 &= { \max_{x \in \J}{(\delta_1 + \delta_2 )} } \left( \max_{[0,L]} \alphacsLi \right)\left( \max_{\alpha \in \{ \alphaceLi \pm \gamma_{1,2} \underline \mu, \alphaceLi \pm \gamma_{1,2} \overline  \mu\}}  \| \ce \|_{L^\infty} ^ {\alpha} \right) \| H \|_{L^\infty}  \exp \left( { \frac {\gamma_1 + \gamma_2} {\min_{[0,t_0]}T}  \|p \|_{L^\infty} } \right),
\end{align}
and repeat the argument. We get good bounds for $\ce, \csB$ for $t \in [0, t_0]$, which do not depend on $\| \phis - \phieLi \|_{L^\infty}$. Hence, applying Lemma \ref{lem:G phi locally Lipschitz continuous} we can obtain some estimates of $\phis$ and $\phieLi$ in $[0,t_0]$. Therefore, by Theorem \ref{thrm:blow up behaviour}, we have that $\tend > t_0$. Hence, by contraposition, if $\tend < \tendI$ then \eqref{eq:blow up of T} must hold.
\end{proof}

\subsection{On the truncated temperature case. Proof of Theorem \ref{prop:global existence truncated}}

\label{sec:proof of the blow up with truncated potential and temp}

\begin{proof} [Proof of Proposition \ref{prop:global existence truncated}]
	It is immediate to establish global sub and supersolutions for $T$, which ensure \eqref{eq:blow up of T} does not happen, and hence {we get} the global existence of solutions.
\end{proof}

\section{Final remarks}

\subsection{Mass conservation and derived properties} \label{sec:mass conservation}
We begin by analyzing the compatibility conditions for the elliptic problems with Neumann boundary conditions. The compatibility conditions for the existence of $\phis$ are 
\begin{equation}
\int_0^{L_1} j^{\Li} \dx = {+\frac{I (t)}{A}}, \qquad \int_{L_1 + \delta}^{L} j^{\Li} \dx =  {- \frac{I(t)}{A}}.
\end{equation}
Hence, we deduce that
\begin{equation}
\int_0^L j^\Li \dx = 0.
\end{equation}
This is also the compatibility condition for system \eqref{eq:model phis}.

Let us take a look at the mass balance condition. {From system \eqref{eq:model cs} we} observe that
\begin{equation*}
\frac{\rm d}{\rm d t} \left( \int_0^{L} \ce \dx \right) = \int_0^L \left(\frac{\partial}{\partial t} \ce \right) \dx =   \int_0^L \alphaceeq j^{\Li} \dx  +D_e \left(   \frac{\partial \ce}{\partial x}  (L) {-} \frac{\partial \ce}{\partial x} (0)\right) = 0 .
\end{equation*} 
Thus, {as expected, the mathematical model satisfies that} the total concentration in the electrolyte is constant. Since $\ce \ge 0$ {(we have constructed the convex spaces $K_X$ and $K_Z$ so that the solution of the mathematical model satisfies this condition, which is {consistent with the physics of the problem} since it is a concentration)} this conclusion also implies that $\| \ce \|_{L^1(0,L)}$ is constant. In other words, as a whole, the electrolyte is saturated of Lithium. Any Lithium contribution from an electrode is immediately compensated somewhere else.\\

In fact, if one considers the Lithium concentration {in} the anode and cathode{, for the pseudo two-dimensional (P2D) model considered here,} one gets
\begin{align*}
\frac{ \rm d }{ \rm d t } \left( {4\pi} \int \limits _0^{L_1} \int \limits _0^{{\Rsm}}  \cs r^2 \dr \dx \right) &= {4\pi}\int \limits _0^{L_1} \int \limits _0^{{\Rsm}} \frac{\partial \cs}{\partial t}r^2 \dr \dx = {4\pi} \int \limits _0^{L_1} \int \limits _0^{{\Rsm}}  D_s  \frac{\partial}{\partial r} \left(r^2 \frac{ \partial \cs }{\partial r} \right) \dr \dx \\
&=   {-}{4\pi}\int_0^{L_1}{\Rsm^2 \alphacsLim }  j^\Li  \dx = -4\pi \frac{\Rsm^2 \alphacsLim I(t)}{A}, 
\end{align*}
and, analogously,
\begin{align*}
\frac{ \rm d }{ \rm d t } \left( {4 \pi} \int \limits _{L_1 + \delta}^{L} \int \limits _0^{{\Rsp}} \cs  r^2 \dr \dx \right)  {= +4\pi \frac{\Rsp^2 \alphacsLip I(t)}{A}}.
\end{align*}
If
\begin{equation} 
{\Rsp^2 \alphacsLip = \Rsm^2 \alphacsLim,}
\end{equation}
then {the P2D model satisfies that} any lose {of Lithium in its} anode is instantaneously received by {its} cathode and viceversa. Otherwise, there {would} {appear to be} a net variation in the total amount of Lithium in the solid phase.\\

{The state of the charge at time $t$ of the cell, $\mathrm{ SOC }(t)$, can be then estimated as a normalized average of the
mass of Lithiun in one of the electrodes, typically the negative one (see \cite{Ramos:2015:lithiumionbatteries}):
$$ \mathrm{ SOC }(t)= \frac{3}{L_1 (\Rsm)^3} \int_0^{L_1} \int_{0}^{\Rsm} r^2 \frac{\cs (r,t;x)}{\csmax} \dr \dx .$$}

\subsection{Model validation and identification of parameters}

	In order to validate the model and {identify suitable} parameters, {it is {often} necessary} to compare the solutions of the model with real-world data {and minimize some error functional using suitable parameter identification techniques and minimization algorithms (see, e.g., \cite{fraguela+infante+ramos2013uniqueness+resultation+parameters,infante+molina-rodriguez+ramos2015identifcation+expansion,ivorra+mohammadi+ramos2015improve+initialization}).} {Since} internal properties of the battery such as the spatial distribution of Lithium ions cannot be measured during operation, {it} is standard to measure only the temperature $T$ (on the outside of the battery, but it is a safe assumption that the temperature is spatially homogeneous) and the output voltage $V$ (see, for example, \cite{Birkl2013,Zhang2014}){, which can be estimated by using \eqref{eq:voltage}.}
	
\section{Conclusions}
{In this work, we have shown that the system of equations \eqref{eq:model ce}--\eqref{eq:model T} has a unique solution (under some mild conditions). In the most general setting, only local in time existence result can be shown, and the nature of the possible blow-up is characterized. The most difficult part of the study of an eventual possible blow-up is the structure of the open circuit potential $U$. By considering the model of $U$ given by \eqref{eq:open circuit potential general}, which was proposed in \cite{ramos+please2015arxiv}, we can rule out {some non-physical behaviours as the possible} causes of blow-up, by making some assumptions on the coefficients. Finally, for the sake of mathematical completeness, we provide some extra conditions that allow for global uniqueness in time {and make some comments regarding the conservation of Lithium in the model and the model validation.}}

\section*{Acknowledgements}

The research of {the authors} was partially supported by the {Spanish Ministry of Economy, Industry and Competitiveness under
projects} MTM2014-57113-P {and MTM2015-64865-P}. The research of D. G\'{o}mez-Castro was supported by a FPU
Grant from the Ministerio de Educaci\'{o}n, Cultura y Deporte (Spain). All {the} authors are members of
the Research Group MOMAT (Ref. 910480) of the UCM 


\end{document}